\numberwithin{equation}{section}
\theoremstyle{definition}
\newtheorem{Thm}[equation]{Theorem}
\newtheorem{Prop}[equation]{Proposition}
\newtheorem{Cor}[equation]{Corollary}
\newtheorem{Lem}[equation]{Lemma}
\newtheorem{conj}[equation]{Conjecture}
\def\imod#1{\allowbreak\mkern5mu{\operator@font mod}\,\,#1}
\begin{document}

\title[Eigenform Product Identities]{Eigenform Product Identities for Hilbert Modular Forms}
\author{Kirti Joshi and Yichao Zhang}
\address{Department of Mathematics, the University of Arizona, Tucson, AZ 85721-0089}
\email{kirti@math.arizona.edu}
\address{Department of Mathematics, the University of Arizona, Tucson, AZ 85721-0089}
\email{yichaozhang@math.arizona.edu,  zhangyichao2002@gmail.com}
\date{}
\subjclass[2010]{Primary: 11F41, 11F30}
\keywords{Hecke eigenform, Hilbert modular form, product identity}

\begin{abstract} We prove that amongst all real quadratic fields and all spaces of Hilbert modular forms of full level and of weight $2$ or greater, the product of two Hecke eigenforms is not a Hecke eigenform except for finitely many real quadratic fields and finitely many weights. We show that for $\mathbb Q(\sqrt 5)$ there are exactly two such identities. 
\end{abstract}

\maketitle
\tableofcontents
\newcommand{\Z}{{\mathbb Z}} 
\newcommand{\Q}{{\mathbb Q}} 

\section{Introduction}
Let $E_k$ be the normalized Eisenstein series of weight $k$ on $SL_2(\Z)$. There are many classical identities between these Eisenstein series $E_k$ for different weights $k$, for instance
\begin{eqnarray}
E_8&=&120E_4^2\\
E_{10}&=&\frac{5040}{11}E_6E_4\\
\Delta_{16}&=&240E_4\Delta,
\end{eqnarray}
where $\Delta_{16}$  (resp. $\Delta$) is the unique, normalized cuspidal Hecke eigenform of weight $16$ (resp. $12$) on $SL_2(\Z)$ (the numerical constants in the above identities are  normalization constants).

These identities provide solutions to the equation
\begin{equation}\label{eq-iden}
g=f\cdot h
\end{equation}
in Hecke eigenforms.   For elliptic modular forms of full level, Duke \cite{duke1999product} and Ghate \cite{ghate2000monomial} independently considered this question and proved that there are precisely $16$ such identities (all of these identities were classically known). Let us note that by considering $q$-expansions it is immediate that a product of two or more normalized cuspidal Hecke eigenforms cannot be a Hecke eigenform. So in \eqref{eq-iden} at most one of $f,h$ can be cuspidal.
We say such an \textit{eigenform product identity holds trivially}, if the  dimension of the corresponding modular form space or the cusp form space for $g$ is equal to one. All of the $16$ identities hold trivially. The proofs of \cite{duke1999product,ghate2000monomial} use Rankin-Selberg convolution. Later Ghate \cite{ghate2002products} considered another type of eigenform product identities, where the eigenforms are a.e. Hecke eigenforms of weight $3$ or greater and of squarefree level, and proved that all such identities hold trivially. Emmons \cite{emmons2005products} considered $\Gamma_0(p)$, with $p\geq 5$ a prime, and classified eigenform product identities for eigenforms away from the level (eigenform for $T_m$ with $m$ coprime to $p$). 
Recently Johnson \cite{johnson2013hecke} considered such identities for $\Gamma_1(N)$ of weight $2$ or greater and found a complete list of $61$ eigenform identities, some of which hold non-trivially. In his thesis  Beyerl \cite{beyerl2012factoring}, for the full modular group, considered the question when the quotient of two Hecke eigenforms is a modular form.

Inspired by Johnson's  approach \cite{johnson2013hecke}, we consider this question  for Hilbert modular forms. We show that product of two Hecke eigenforms over a fixed real quadratic field can be another Hecke eigenform. For instance we show that for $F=\Q(\sqrt{5})$
\begin{eqnarray}
E_4&=& 60E_2^2,\\
h_8&=& 120 E_2\cdot h_6,
\end{eqnarray}
where $E_2=E_2(1,1),E_8=E_4(1,1)$ are Eisenstein series of parallel weight two (resp. four) with trivial characters, $h_6$ (resp. $h_8$) is the unique normalized cuspidal Hecke eigenform of parallel weight six (resp. eight) for $GL_2^+(\mathcal O_F)$ (see Theorem \ref{D=5}).

Hence identities of the type \eqref{eq-iden} exists for Hilbert modular forms. So it is natural to ask if there are only finitely many such identities amongst Hilbert modular forms. In this paper we will only consider Hilbert modular forms for Hilbert modular groups of full levels and answer this affirmatively.

In fact, a much stronger assertion is true for Hilbert modular forms. Explicitly, we prove  that \textit{amongst all real quadratic fields} $F$ the equation \eqref{eq-iden} has only finitely many solutions in Hecke eigenforms of full level and weights $2$ or greater:

\begin{Thm}\label{thm} Over all real quadratic number fields $F$ and all Hecke eigenforms for $\text{GL}_2^+(\mathcal O_F)$ of integral parallel weight $2$ or greater,
the equation 
$g=f\cdot h$ in the triple $(g,f,h)$ has only finitely many solutions.
\end{Thm}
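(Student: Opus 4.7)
The plan is to follow the Rankin--Selberg strategy of Duke \cite{duke1999product} and Ghate \cite{ghate2000monomial}, adapted to the Hilbert modular setting over a real quadratic field $F$, and to combine it with effective estimates that are uniform in the parallel weight $k$ and the discriminant $D_F$.

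The first step is a structural reduction. Comparing constant terms of the $q$-expansion at each cusp shows that in the identity $g=fh$ at most one of $f,h$ can be cuspidal, since the product of two cuspidal forms vanishes at every cusp while a normalized Hecke eigenform does not. This leaves two cases: (a) $f$ and $h$ are both Eisenstein, so $g$ is Eisenstein; (b) exactly one of $f,h$ is cuspidal, forcing $g$ to be cuspidal. In case (b), comparing Hecke eigenvalues at unramified primes and invoking strong multiplicity one for Hilbert cusp forms identifies the cuspidal factor with $g$ up to scalar, reducing the identity to the schematic form $g = f\cdot g$ with $f$ Eisenstein of parallel weight $r\ge 2$.

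For case (a), I would use the explicit Fourier expansion of the Hilbert Eisenstein series: the constant term is essentially $\zeta_F(1-k)$, and the $\nu$-th coefficient is a divisor sum $\sigma_{k-1}(\nu)$. Matching constant terms gives an algebraic relation of the form $\zeta_F(1-k)= c\cdot \zeta_F(1-r)\zeta_F(1-s)$, and matching higher coefficients gives polynomial identities in the norms $N(\mathfrak p)$ of prime ideals. Converting to $\zeta_F(k)$ via the functional equation and invoking Brauer--Siegel-type lower bounds on $\zeta_F$, the relation forces $(F,k)$ into a bounded region, leaving only finitely many triples. For case (b), I would compute the Petersson inner product
\[
\langle g,g\rangle \;=\; \langle f\cdot g,\,g\rangle
\]
by unfolding the holomorphic Eisenstein factor $f$; the right-hand side becomes an explicit multiple of a special value of the Rankin--Selberg $L$-function $L(s,g\otimes\bar g) = \zeta_F(s)\,L(s,\mathrm{Ad}\,g)$. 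Feeding this into Hoffstein--Lockhart-type lower bounds on the adjoint $L$-value, and using standard estimates for the Petersson norm in terms of $D_F$ and $k$, yields an inequality that can hold only for bounded $D_F$ and $k$.

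The main obstacle will be making all these bounds simultaneously effective and uniform across real quadratic fields: within each fixed space finiteness is essentially automatic (Sturm-bound arguments plus a finite check), but the statement of the theorem quantifies over all $F$ and all weights at once, so the dependence of the unfolding constant and of the Hoffstein--Lockhart estimate on $D_F$ must be tracked carefully. I expect this to be the technical heart of the argument.
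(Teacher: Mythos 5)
Your overall dichotomy (both factors Eisenstein versus exactly one factor cuspidal) matches the paper's decomposition into Theorems \ref{thm1} and \ref{thm2}, and in the Eisenstein case your plan is broadly the paper's. Two corrections there: the constant-term comparison does not yield a product identity $\zeta_F(1-k)=c\,\zeta_F(1-r)\zeta_F(1-s)$ but rather the reciprocal-sum identity \eqref{eq4.2}, obtained by comparing both the constant term and the $\mathcal O$-th coefficient; and no Brauer--Siegel input is needed, since all $L$-values in play are at $s=k\geq 2$, where the elementary Euler-product bounds \eqref{eq1.4} are already effective. Note also that \eqref{eq4.2} alone does not dispose of the equal-weight case $k_1=k_2$ (the two reciprocals on the left can interfere when the characters differ), which is why the paper additionally compares coefficients at $(2)$, $(3)$ and $(4)$ via Lemma \ref{product}; your ``matching higher coefficients'' gestures at this but is where the real work lies.

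The genuine gap is in your case (b). You assert that comparing Hecke eigenvalues and invoking strong multiplicity one identifies the cuspidal factor $h$ with $g$ up to scalar. This is false: $h$ has weight $k_2$ while $g=f\cdot h$ has weight $k_1+k_2$, so they are distinct eigenforms and strong multiplicity one does not apply; their eigenvalues at a prime above $(2)$ agree only up to a character twist, and the relation $c(\mathfrak p^2,\cdot)=c(\mathfrak p,\cdot)^2-\psi(\mathfrak p)N(\mathfrak p)^{k-1}$ involves the weight, so the two eigenvalue systems genuinely differ --- this discrepancy is precisely what the paper exploits. Consequently your identity $\langle g,g\rangle=\langle f\cdot g,g\rangle$ and the appeal to $L(s,\mathrm{Ad}\,g)$ via Hoffstein--Lockhart rest on a false reduction; the correct unfolding of $\langle f\cdot h,g\rangle$ would produce a Rankin--Selberg convolution of $h$ against $g$, two forms of different weights, and controlling that uniformly in $D$ and $k$ is exactly the difficulty you defer. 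The paper avoids all of this with a more elementary device: by Shimura's integrality of Fourier coefficients, \eqref{eq5.1} forces $1/c_1(0,f)$ to be an algebraic integer, while \eqref{eq1.4} shows that all of its Galois conjugates have absolute value less than $1$ once $k_1$ or $D$ is large, a contradiction that bounds $f$; then, with $f$ fixed, the Hecke relations at the primes above $(2)$ (or at $(4)$ in the inert case) together with the Kim--Sarnak bound \eqref{eq2.3} bound $k_2$. You would need either to abandon the multiplicity-one step (it cannot be repaired as stated) or to switch to an argument of this kind.
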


Identities such as \eqref{eq-iden} provide relations between Fourier coefficients. One of the important observations of \cite{johnson2013hecke} is that relations between Fourier coefficients at small primes (or at powers of small primes) can be used to provide effective bounds for such identities. We adapt methods of \cite{johnson2013hecke} to the Hilbert modular form situation, but there are new features: for instance we exploit the discriminant of the  real quadratic field $F$, which manifests itself through its presence in the functional equation of $L$-functions, to effectively bound the number of the  real quadratic field for which product identities can exist. 
As in Johnson's treatment of the classical case, all bounds in this paper are effective and can be used to obtain a complete list of eigenform product identities, provided that we have the structure of the spaces of Hilbert modular forms of small weights for small $D$ (discriminant of $F$).  We are content with the concrete case for $\mathbb Q(\sqrt 5)$ for the moment, and prove that there are exactly two such identities (Theorem \ref{D=5}), using such effective bounds in the proof of the Theorem~\ref{thm}. As in the case of elliptic modular forms of full level, these two identities for $\mathbb Q(\sqrt 5)$ hold trivially.

We remark that for general levels and general narrow ray class characters, the conductors of the characters will appear in the L-values in question. To treat such general situation, one should consider more Fourier coefficients and obtain more equations in the weights to get around of the conductors and finally obstruct such identities. 

In Section 2 and 3, we set up the notations and provide the necessary background on Hilbert modular forms of full levels. In Section 4, we prove some formulas on Fourier coefficients of the product of two Hilbert modular forms and break up  Theorem~\ref{thm} into Theorem~\ref{thm1} and Theorem~\ref{thm2}, which will be proved in Section 5 and 6 respectively. In Section 7, we obtain the complete list of such identities for $\mathbb Q(\sqrt 5)$.
Finally, in Conjecture~\ref{con:general}, we conjecture that our finiteness result (Theorem~\ref{thm}) should also hold Hilbert modular forms of weights greater than or equal to two for all totally real fields of any fixed degree and all levels and all narrow ray class characters. 

\section{Preliminaries}
In this section, we set up the notations and recall some necessary notions and results on real quadratic fields that will be used in later sections.

Let $F=\mathbb Q(\sqrt{d})$ be a real quadratic field with $d>1$ being a squarefree integer. Let $\mathcal O=\mathcal O_F$ be the ring of integers of $F$, $\mathcal O^\times$ the group of units, $\mathfrak d$ the different of $F$, and $D$ the discriminant of $F$. Therefore $D=d$ if $d\equiv 1\imod 4$ and $D=4d$ otherwise. 
Let $\mathfrak p$ denote a prime ideal of $\mathcal O$, and $F_\mathfrak{p}$ and $\mathcal O_\mathfrak{p}$ be the completions of $F$ and $\mathcal O$ at $\mathfrak p$. For any fractional ideal $\mathfrak c$, considered as a $\mathbb Z$-lattice, we denote $\mathfrak c^\vee$ its dual lattice under the trace form of $F/\mathbb Q$; $\mathfrak c^\vee$ is also a fractional ideal. In particular, $\mathcal O^\vee=\mathfrak d^{-1}$.

We fix one real embedding of $F$ and for $a\in F$, we denote $a'$ the conjugate of $a$, which gives the other real embedding. Let $F_\mathbb{R}=F\otimes_\mathbb{Q}\mathbb R$, so $a\mapsto (a,a')$ gives the embedding $F\subset F_\mathbb{R}$. An element $x$ in $F_\mathbb{R}$, hence in $F$, is called totally positive if its two components are both positive; denoted by $x\gg 0$. For $A\subset F_\mathbb{R}$, we denote the subset of totally positive elements by $A^+$. 
Two fractional ideals $\mathfrak a,\mathfrak b$ are in the same narrow class if $\mathfrak a=(a)\mathfrak b$ for some $a\gg 0$ in $F^\times$.
We denote the narrow class number of $F$ by $h^+$.

Let $\mathbb A$, $\mathbb A^\times$, $\mathbb A_f$ and $\mathbb A_f^\times$ be the ring of adeles, the group of ideles, the ring of finite adeles and the group of finite ideles, respectively. We recall various characters. A Hecke character $\psi$ is a continuous character on $\mathbb A^\times/F^\times$ and $\psi=\prod_v\psi_v$ decomposes uniquely into local characters. We shall denote the induced character on $\mathbb A^\times$ also by $\psi$. An narrow ideal class character $\psi$ is a Hecke character that is trivial on the subgroup
$F^\times F_\mathbb{R}^+\prod_{\mathfrak p}\mathcal O_\mathfrak{p}^\times$. Equivalently, in terms of ideals, this is a character on the narrow ideal class group such that $\psi(a\mathcal O)=1$ for all $a\gg 0$ in $F$. There exists a unique pair $(r,r')\in \{0,1\}^2$, such that
\[\psi(a\mathcal O)=\text{sgn}(a)^{r}\text{sgn}(a')^{r'},\text{ for all } a\in F^\times.\] 
Note that in general not all sign vectors are associated to a narrow ideal class character. Since the narrow class group is abelian, we have precisely $h^+$ narrow ideal class characters.

The Dedekind zeta function for $F$ is defined as
\[\zeta_F(s)=\sum_{\mathfrak m}N(\mathfrak m)^{-s}=\prod_{\mathfrak p}(1-N(\mathfrak p)^{-s})^{-1},\]
where $\mathfrak m$ is over all nonzero integral ideals and $\mathfrak p$ is over all prime ideals in $\mathcal O$. In general, for any narrow ideal class character $\psi$, we define the Hecke L-function 
\[L(s,\psi)=\sum_{\mathfrak m}\psi(\mathfrak m)N(\mathfrak m)^{-s}=\prod_{\mathfrak p}(1-\psi(\mathfrak p)N(\mathfrak p)^{-s})^{-1}.\]
In particular, $\zeta_F(s)=L(s,1)$, where we denote the trivial character by $1$.
The series and the product for $L(s,\psi)$ are absolutely convergent for $\text{Re}(s)>1$, can be continued to be a meromorphic function on $\mathbb C$, and satisfies a functional equation. More precisely, assuming that the sign vector for $\psi$ is $(r,r')$, we have the following functional equation
\begin{equation}\label{eq0}
L(s,\psi)=W(\psi)(\pi^{-2}D)^{\frac{1}{2}-s}\frac{\Gamma\left(\frac{1-s+r}{2}\right)\Gamma\left(\frac{1-s+r'}{2}\right)}{\Gamma\left(\frac{s+r}{2}\right)\Gamma\left(\frac{s+r'}{2}\right)}L(1-s,\overline{\psi}),
\end{equation}
where $|W(\psi)|=1$ (See Corollary 8.6, Chapter VII in \cite{neukirch1999algebraic} for details).

The values of $L(s,\psi)$ at $1-k$ with $k\geq 2$, when $r=r'\equiv k\imod 2$, are given by
\begin{equation}
L(1-k,\psi)=W(\psi)\frac{2}{\pi}\left(\frac{D}{4\pi^2}\right)^{k-\frac{1}{2}}\Gamma(k)^2L(k,\overline{\psi}).
\end{equation}
In particular, $L(1-k,\psi)\neq 0$. Moreover, since for any $\psi$,
\[\zeta(4k)/\zeta^2(k)\leq \zeta_F(2k)/\zeta_F(k)\leq |L(k,\psi)|\leq \zeta_F(k)\leq \zeta^2(k), \quad k\geq 2,\] we have the bounds
\begin{equation}\label{eq1.4}
\frac{2}{\pi}\left(\frac{D}{4\pi^2}\right)^{k-\frac{1}{2}}\Gamma(k)^2\frac{\zeta(4k)}{\zeta^2(k)}\leq |L(1-k,\psi)|\leq \frac{2}{\pi}\left(\frac{D}{4\pi^2}\right)^{k-\frac{1}{2}}\Gamma(k)^2\zeta^2(k).
\end{equation}

\section{Hilbert Modular Forms}

We recall the classical and adelic Hilbert modular forms of full levels. It is well-known that the Eisenstein space vanishes if the weight is non-parallel (see, for example, \cite[Corollary in Section 1.4]{garrett1990holomorphic}), so we shall only consider parallel weights, since otherwise no such identities exist. Materials in this section can be found in \cite{garrett1990holomorphic} and \cite{shimura1978special}, and we note that our notion of congruence subgroups are more restrictive.

A (Hilbert) congruence subgroup $\Gamma$ is a subgroup of $\text{GL}_2(F)$ such that there exists an open compact subgroup $K\subset\text{GL}_2(\mathbb A_f)$ with $\Gamma=\text{GL}_2(F)\cap \text{GL}_2^+(F_\mathbb R)K$, where $+$ means the determinant is totally positive. It is clear that $\Gamma$ and $K$ determines each other.
For a fractional ideal $\mathfrak c$ and an integral ideal $\mathfrak n$ in $F$, we set
\[\Gamma_0(\mathfrak c,\mathfrak n)=
\left\{\gamma=\begin{pmatrix}
a&b\\c&d
\end{pmatrix}\in
\begin{pmatrix}
\mathcal O&\mathfrak c^{-1}\\
\mathfrak n\mathfrak c&\mathcal O
\end{pmatrix}\colon  \text{det}(\gamma)\in \mathcal{O}^{\times+} \right\}.\] Here $\mathfrak n$ is called the level. It is easy to see that $\Gamma_0(\mathfrak c,\mathfrak n)$ is a congruence subgroup and we denote the corresponding compact open subgroup by $K_0(\mathfrak c,\mathfrak n)$.  Denote 
\[\gamma^\iota=\begin{pmatrix}
d &-b\\
-c& a
\end{pmatrix}, \quad\text{if \ } \gamma=\begin{pmatrix}
a &b\\
c& d
\end{pmatrix},\]
and it defines an involution on $\text{Mat}_2(\mathbb A)$, under which $\Gamma_0(\mathfrak c,\mathfrak n)$ and $K_0(\mathfrak c,\mathfrak n)$ are invariant. 

We shall be only interested in the full-level groups $\Gamma_0(\mathfrak c,\mathcal O)$. Denote $\Gamma=\Gamma_0(\mathfrak c,\mathcal O)$ for the moment.
Let $\mathbb H^2=\{z=(z_1,z_2)\colon \text{Im}(z_i)>0, i=1,2\}$, and for any element
\[g=(g_1,g_2)=\left(\begin{pmatrix}
a_1 &b_1\\c_1&d_1
\end{pmatrix},\begin{pmatrix}
a_2 &b_2\\c_2&d_2
\end{pmatrix}\right)\in\text{GL}_2^+(F_\mathbb R),\]
set \[j(g,z)=(c_1z_1+d_1)(c_2z_2+d_2), \quad gz=\left(\frac{a_1z_1+b_1}{c_1z_1+d_1},\frac{a_2z_2+b_2}{c_2z_2+d_2}\right).\]
Via the embedding $\Gamma\subset \text{GL}_2^+(F_\mathbb R)$ by $\gamma\mapsto (\gamma,\gamma')$, we have an action of $\Gamma$ on $\mathbb H^2$; here $\gamma'$ is obtained by taking conjugates of all entries of $\gamma$.
A Hilbert modular form for $\Gamma$ of parallel weight $k\in\mathbb Z$, is a holomorphic function $f$ on $\mathbb H^2$
such that $f|_k\gamma (z)=f(z)$ for any $\gamma\in \Gamma$ and $z\in\mathbb H^2$; here the slash-$k$ operator (denoted $|_k$)is defined as
\[f|_{k}\gamma(z)=(\text{det}(\gamma\gamma'))^{\frac{{k}}{2}}j(\gamma,z)^{-{k}}f(\gamma z),\quad\text{with } \gamma=\begin{pmatrix}
a&b\\c&d
\end{pmatrix}.\]
We denote the space of such forms by $M_k(\Gamma)$. Any $f\in M_k(\Gamma)$ admits a Fourier expansion of the form
\[f(z)=\sum_{\mu\in (\mathfrak c^{-1})^\vee}a(\mu) \text{exp}\left(2\pi i\text{Tr}(\mu z)\right)=\sum_{\mu\in (\mathfrak c^{-1})^\vee}a(\mu) q^{\mu},\]
where $\text{Tr}(\mu z)=\mu z_1+\mu'z_2$, $q=(q_1,q_2)=(e^{2\pi iz_1},e^{2\pi iz_2})$, $q^\mu=q_1^{\mu}q_2^{\mu'}$. The \emph{Koecher principle}
says that $a(\mu)\neq 0 \Rightarrow \mu=0 \text{ or } \mu\gg 0$. Moreover for any $\varepsilon\in \mathcal O^{\times+}$ and any $\mu\in (\mathfrak c^{-1})^\vee$, we have $a(\varepsilon\mu)=N(\varepsilon)^{\frac{{k}}{2}}a(\mu)$. Similar results hold for all congruence subgroups.
We call $f\in M_k(\Gamma)$ cuspidal if $a_\gamma(0)=0$ for any $\gamma\in \text{GL}_2^+(F)$ with $a_\gamma(\mu)$ the Fourier coefficient of $f|_k\gamma$ (which is a Hilbert modular form for the congruence subgroup $\gamma^{-1}\Gamma\gamma$). The space of cusp forms is denoted by $S_k(\Gamma)$. The Petersson inner product is defined by
\[\langle f,h\rangle_\Gamma =\frac{1}{\nu(\Gamma\backslash \mathbb H^2)}\int_{\Gamma \backslash \mathbb H^2}f(z)\overline{h(z)}(y_1y_2)^kd\nu(z),\quad d\nu(z)=\prod_{j=1,2}\frac{dx_jdy_j}{y_j^2}, z_j=x_j+iy_j, j=1,2.\] With this, the Eisenstein space $E_k(\Gamma)$ is defined as the orthogonal complement of $S_k(\Gamma)$ in $ M_k(\Gamma)$. As in the elliptic case, the Petersson inner product is well-defined if one of the two components is cuspidal.

In general, Hecke theory is not available for $M_k(\Gamma)$ unless $h^+=1$. In order to explain the Hecke theory, we need adelic Hilbert modular forms. Now we fix $\Gamma=\Gamma_0(\mathcal O,\mathcal O)$ and $K=K_0(\mathfrak d,\mathcal O)$. Note that $K$ is not the compact open subgroup for $\Gamma$ and the shift by $\mathfrak d$ is for the correct definition of the normalized Fourier coefficients (see below). Set $K^+_{\infty}=(\mathbb R^\times \text{SO}_2(\mathbb R))^2$ and denote also by $i$ the element $(i,i)$ by abuse of notation. 
An adelic Hilbert modular form of weight $k$ for $\Gamma$ is a function $f:\text{GL}_2(\mathbb{A})\rightarrow\mathbb C$ such that the following properties hold:
\begin{enumerate}
\item $f(\gamma g u)=f(g)$ for all $\gamma\in \text{GL}_2(F)$, $g\in \text{GL}_2(\mathbb{A})$, and $u\in K$.
\item $f(gu_\infty)=(\det u_\infty)^{\frac{k}{2}}j(u_\infty,i)^{-k}f(g)$ for all $u_\infty\in K^+_{\infty}$ and $g\in \text{GL}_2(\mathbb{A})$.
\item For any $x\in \text{GL}_2(\mathbb A_{f})$, we define a function
$f_x:\mathbb{H}^n\rightarrow \mathbb{C}$ by 
$$
f_x(z)=(\det g)^{-\frac{k}{2}}j(g,i)^{k}f(xg)
$$ 
for $g_\infty\in \text{GL}_2^+(\mathbb{R})^2$ such that $g_\infty( i)=z$. Then $f_x$ is a holomorphic function.

\item Let $U$ be the unipotent radical of $\text{Res}_{F/\mathbb Q}\text{GL}_2$. An adelic Hilbert modular form $f$ is called a cusp form if 
$$
\int_{U(\mathbb Q)\backslash U(\mathbb{A}_\mathbb{Q})} f(ug)du=0,
$$ for all $g\in \text{GL}_2(\mathbb A)$, where $du$ is a Haar measure on $U(\mathbb A_\mathbb{Q})$.
\end{enumerate}
We denote the space of holomorphic and cuspidal adelic Hilbert modular forms by $\mathcal M_k$ and $\mathcal S_k$ respectively. Let $\psi$ be a narrow ideal class character and we say that $f\in \mathcal M_k$ has central character $\psi$ if $f(ag)=\psi(a)f(g)$ for each $a\in\mathbb A^\times$. The subspace with central character $\psi$ is denoted by $\mathcal M_k(\psi)$ and $\mathcal S_k(\psi)=\mathcal S_k\cap \mathcal M_k(\psi)$. 

We state the relation between these two versions of Hilbert modular forms. Let \[\{\mathfrak c_\nu :=t_\nu\mathcal O\}_{\nu=1}^{h^+}\] be a complete representatives set of the narrow class group of $F$, with $t_\nu$ being finite ideles. We shall assume that $t_1\mathcal O$ represents the identity narrow class. Set $\Gamma_\nu=\Gamma_0(\mathfrak c_\nu\mathfrak d,\mathcal O)$.
The Petersson inner product is defined by
\[\langle f, h\rangle=\sum_{\nu}\langle f_\nu,h_\nu\rangle_{\Gamma_\nu},\] with which we define the Eisenstein subspaces $\mathcal E_k$ and $\mathcal E_k(\psi)$ to be the orthogonal complement of $\mathcal S_k$ in $\mathcal M_k$ and $\mathcal S_k(\psi)$ in $\mathcal M_k(\psi)$ respectively. 

The following theorem is essentially a special case of Shimura's result \cite{shimura1978special}, where he treated general levels and general narrow ray class characters but did not give the precise definition of the adelic Hilbert modular forms explicitly. The proof is standard, and the argument for elliptic modular forms (\cite{gelbart1975automorphic}) can be carried over without difficulty. See also Demb{\'e}l{\'e} and Cremona's notes \cite{dembelemodular}.

\begin{Thm}[{\cite[Shimura]{shimura1978special}}]\label{iso}
There exist isomorphisms of complex vector spaces
\[
\mathcal{M}_{\mathbf{k}}\simeq \bigoplus_{\nu=1}^{h^+} M_{\mathbf{k}}(\Gamma_\nu)
,\quad
\mathcal{S}_{\mathbf{k}}\simeq \bigoplus_{\nu=1}^{h^+} S_{\mathbf{k}}(\Gamma_\nu)\quad \text{ and }\quad
\mathcal{E}_{\mathbf{k}}\simeq \bigoplus_{\nu=1}^{h^+} E_{\mathbf{k}}(\Gamma_\nu).
\] Moreover,
\[\mathcal{M}_{\mathbf{k}}= \bigoplus_{\psi} \mathcal{M}_{\mathbf{k}}(\psi),\quad
\mathcal{S}_{\mathbf{k}}= \bigoplus_{\psi} \mathcal{S}_{\mathbf{k}}(\psi)\quad \text{ and }\quad
\mathcal{E}_{\mathbf{k}}= \bigoplus_{\psi} \mathcal{E}_{\mathbf{k}}(\psi),\]
where in all sums $\psi$ runs through all $h^+$ narrow ideal class characters and some components may vanish.
\end{Thm}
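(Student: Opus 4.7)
The plan is to follow the standard strong-approximation dictionary between adelic automorphic forms on $\text{GL}_2$ and tuples of classical modular forms, in the spirit of Gelbart's $\text{GL}_2/\mathbb Q$ treatment \cite{gelbart1975automorphic}, keeping careful track of the twist by the different $\mathfrak d$.

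First I would use strong approximation for $\text{SL}_2$ together with the determinant map $\det:\text{GL}_2(\mathbb A)\to\mathbb A^\times$ to obtain the double-coset decomposition
\[
\text{GL}_2(\mathbb A)=\bigsqcup_{\nu=1}^{h^+}\text{GL}_2(F)\,x_\nu\,\text{GL}_2^+(F_\mathbb R)\,K,\qquad x_\nu=\begin{pmatrix}t_\nu & 0\\ 0 & 1\end{pmatrix},
\]
the index set being identified with $F^\times F_\mathbb R^+\backslash\mathbb A^\times/\det(K)$, which is exactly the narrow class group with representatives $t_\nu$. A direct computation conjugating $K$ by $x_\nu$ then shows
\[
\text{GL}_2(F)\cap\left(x_\nu\,\text{GL}_2^+(F_\mathbb R)\,K\,x_\nu^{-1}\right)=\Gamma_\nu=\Gamma_0(\mathfrak c_\nu\mathfrak d,\mathcal O);
\]
the appearance of $\mathfrak c_\nu\mathfrak d$ rather than $\mathfrak c_\nu$ is precisely the effect of choosing $K_0(\mathfrak d,\mathcal O)$ instead of $K_0(\mathcal O,\mathcal O)$ on the adelic side.

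Next, for $f\in\mathcal M_k$ the function $f_\nu(z):=f_{x_\nu}(z)$ defined via condition (3) lies in $M_k(\Gamma_\nu)$: the transformation law follows from (1) together with (2), holomorphy from (3), and the holomorphicity at the cusps from the Koecher principle. Conversely, given a tuple $(h_\nu)\in\bigoplus_\nu M_k(\Gamma_\nu)$, the rule $f(\gamma x_\nu g_\infty u)=(\det g_\infty)^{k/2}j(g_\infty,i)^{-k}h_\nu(g_\infty\cdot i)$ is well-defined on each double coset precisely because of the $\Gamma_\nu$-invariance of $h_\nu$, and assembles into an element of $\mathcal M_k$. These two constructions are mutually inverse, giving the first isomorphism. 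The cuspidal statement is obtained by unfolding the integral in (4) at each base point $x_\nu g_\infty$ into a sum of Fourier constant terms of slashes $f_\nu|_k\gamma$ over the cusps of $\Gamma_\nu$, so adelic cuspidality translates exactly to classical cuspidality at every cusp on every component; the Eisenstein isomorphism then follows by taking orthogonal complements in the $\nu$-summed Petersson product.

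Finally, the central-character decomposition comes from the observation that $Z(\mathbb A)=\mathbb A^\times$ acts on $\mathcal M_k$ through its quotient
\[
F^\times F_\mathbb R^+\prod_{\mathfrak p}\mathcal O_{\mathfrak p}^\times\,\backslash\,\mathbb A^\times,
\]
which is a finite abelian group of order $h^+$ (the narrow class group); orthogonality of its characters projects $\mathcal M_k$ onto its $\psi$-isotypic components $\mathcal M_k(\psi)$ over the $h^+$ narrow ideal class characters, and the same projection commutes with the cuspidal integral in (4) and with the Petersson inner product, yielding the analogous decompositions of $\mathcal S_k$ and $\mathcal E_k$. The main point requiring care is the bookkeeping with $\mathfrak d$: one has to check that with the diagonal choice of $x_\nu$ and the adelic level $K_0(\mathfrak d,\mathcal O)$, the Fourier coefficients of $f_\nu$ are indexed by the correct lattice for $\Gamma_\nu$ (so that, after normalization, adelic Fourier coefficients are naturally indexed by integral ideals). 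Modulo this normalization check and verifying it matches Shimura's setup in \cite{shimura1978special}, no new ideas beyond strong approximation are needed.
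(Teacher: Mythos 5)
Your sketch is the standard strong-approximation dictionary (double cosets indexed by the narrow class group, conjugation of $K_0(\mathfrak d,\mathcal O)$ producing $\Gamma_0(\mathfrak c_\nu\mathfrak d,\mathcal O)$, unfolding of the cuspidality integral, and isotypic decomposition under the center acting through the narrow class group), which is precisely the argument the paper invokes: it gives no proof of its own, citing Shimura and noting that the elliptic-modular argument of Gelbart carries over verbatim. So your proposal is correct and takes essentially the same route the paper intends; the only point to double-check in a full write-up is the $t_\nu$ versus $t_\nu^{-1}$ convention in the conjugation step so that the Fourier expansion of $f_\nu$ is indexed by $\mathfrak c_\nu=t_\nu\mathcal O$ as the paper requires.
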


Under such isomorphisms, we may write an element $f\in \mathcal{M}_{\mathbf{k}}$ as $f=(f_\nu)$ with $f_\nu\in  M_{\mathbf{k}}(\Gamma_\nu)$. For each integral ideal $\mathfrak m$, assuming that $\mathfrak m=t_\nu^{-1}(\mu)$ with $\mu\in (t_\nu\mathcal O)^+$, we define
\[c(\mathfrak m,f)=N(t_\nu)^{-\frac{k}{2}}a_\nu(\mu),\]
where $a_\nu(\mu)$ is the $\mu$-th normalized Fourier coefficient of $f_\nu$. This is clearly well-defined and we call it the $\mathfrak m$-th Fourier coefficient of $f$. The normalized constant term $c_\nu(0,f)$, for each $\nu$, is defined to be
\[c_\nu(0,f)=N(t_\nu)^{-\frac{k}{2}}a_\nu(0).\]

It is the space $ \mathcal{M}_{\mathbf{k}}$ that carries the Hecke theory. More precisely, for each integral ideal $\mathfrak m$, we have a Hecke operator $T_\mathfrak{m}$ on $ \mathcal{M}_{\mathbf{k}}$. The Hecke algebra generated by $T_\mathfrak{m}$ is commutative and normal and is also generated by $T_\mathfrak{p}$ for prime ideals $\mathfrak p$. The subspaces $\mathcal S_k$, $\mathcal E_k$, $\mathcal S_k(\psi)$ and $\mathcal E_k(\psi)$ are invariant under the Hecke algebra. A Hecke eigenform $f\in \mathcal{M}_{\mathbf{k}}$ is an eigenfunction for all $T_\mathfrak{m}$ and we call it normalized if $c(\mathcal O,f)=1$. For a normalized Hecke eigenform, the eigenvalue of $T_\mathfrak{m}$ is $c(\mathfrak m,f)$ for any $\mathfrak m$. The Hecke multiplicativity properties are similar to those in the case of elliptic modular forms. For example, if $f\in \mathcal M_k(\psi)$ is a normalized Hecke eigenform, then
$c(\mathfrak{mn},f)=c(\mathfrak{m},f)c(\mathfrak{n},f)$ if $(\mathfrak m,\mathfrak n)=1$, and if $\mathfrak p$ is a prime ideal, then
\begin{equation}\label{eq2.2}
c(\mathfrak p^2,f)=c(\mathfrak p,f)^2-\psi(\mathfrak p)N(\mathfrak p)^{k-1}.
\end{equation}
The following bound towards the \emph{generalized Ramanujan conjecture}, best so far, was obtained by Kim and Sarnak \cite{kim2003refined}: if $f\in \mathcal S_k$ is a normalized Hecke eigenform and $\mathfrak p$ is a prime ideal, then 
\begin{equation}\label{eq2.3}
|c(\mathfrak p,f)|\leq 2N(\mathfrak p)^{\frac{k-1}{2}+\frac{7}{64}}.
\end{equation}
This will be needed for the asymptotic behavior of two sides of some equations in the weights, which will obstruct the eigenform identities eventually.

\section{Product of Two Eigenforms}

Assume $k\geq 2$ from now on and keep other notations in the previous sections. We first recall a theorem of Shimura \cite{shimura1978special} on Eisenstein series. The computation of the constant terms is due to Dasgupta, Darmon and Pollack \cite{dasgupta2011hilbert}.

\begin{Thm}[{\cite[Proposition 3.4]{shimura1978special},\cite[Proposition 2.1]{dasgupta2011hilbert}}]\label{shimura} Let $k\geq 2$ and $\phi$ and $\psi$ be two narrow ideal class characters and assume that $(\phi_v\psi_v)(-1)=(-1)^k$ for both of the two real places $v$. There exists an element $E_k(\phi,\psi)\in \mathcal M_k(\phi\psi)$ such that 
\[c(\mathfrak m,E_k(\phi,\psi))=\sum_{\mathfrak r\mid \mathfrak m}\phi(\mathfrak m\mathfrak r^{-1})\psi(\mathfrak r)N(\mathfrak r)^{k-1},\] for all nonzero integral ideals $\mathfrak m$, and $E_k(\phi,\psi)$ is a normalized eigenform for $T_\mathfrak{m}$. Moreover, for each $\nu$, 
\[c_\nu(0,E_k(\phi,\psi))=2^{-2}\phi^{-1}(t_\nu)L(\phi^{-1}\psi,1-k).\]
\end{Thm}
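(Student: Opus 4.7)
The plan is to construct $E_k(\phi,\psi)$ as an adelic Eisenstein series associated to the principal series representation with inducing data $(\phi,\psi)$, and then read off its Fourier expansion. Concretely, let $B\subset\mathrm{GL}_2$ denote the upper triangular Borel subgroup and, for $s\in\mathbb C$, define a flat section $\Phi_{s,\phi,\psi}\colon\mathrm{GL}_2(\mathbb A)\to\mathbb C$ whose value on $\begin{pmatrix}a&b\\0&d\end{pmatrix}g$ is $\phi(a)\psi(d)|a/d|^{s+1/2}\Phi_\infty(g)$, where at the archimedean places $\Phi_\infty$ is the unique (up to scalar) $K_\infty^+$-vector of weight $k$ in the induced representation; the parity hypothesis $(\phi_v\psi_v)(-1)=(-1)^k$ at both real places is exactly what guarantees that such a nonzero weight-$k$ vector exists. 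Form the Eisenstein series
\[
E(g,s;\phi,\psi)=\sum_{\gamma\in B(F)\backslash \mathrm{GL}_2(F)}\Phi_{s,\phi,\psi}(\gamma g),
\]
which converges for $\mathrm{Re}(s)$ large and admits meromorphic continuation in $s$. The candidate $E_k(\phi,\psi)$ will be the value (or an appropriate residue, once normalizations are fixed) at the point $s=s_k$ corresponding to holomorphic weight $k$; standard arguments show this value is holomorphic in $g$, transforms under $K=K_0(\mathfrak d,\mathcal O)$ correctly, and has central character $\phi\psi$, so it lies in $\mathcal M_k(\phi\psi)$.

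Next I would compute the Fourier expansion via the Bruhat decomposition $\mathrm{GL}_2(F)=B(F)\sqcup B(F)wN(F)$. The $B(F)$-coset contributes the constant term at each cusp, which unfolds to an intertwining integral whose evaluation, via standard local Tate integrals at finite places and the archimedean gamma factor computations, produces (after normalizing via $N(t_\nu)^{-k/2}$ and evaluating at the correct $s$) the value $2^{-2}\phi^{-1}(t_\nu)L(\phi^{-1}\psi,1-k)$ prescribed in the statement. The $B(F)wN(F)$ coset, unfolded against an additive character, yields a product of local Whittaker integrals. At each unramified finite place one computes a geometric-series-type Whittaker integral which, combined across all primes dividing $\mathfrak m$, assembles into precisely the divisor sum
\[
\sum_{\mathfrak r\mid\mathfrak m}\phi(\mathfrak m\mathfrak r^{-1})\psi(\mathfrak r)N(\mathfrak r)^{k-1}.
\]
At the archimedean places the Whittaker integral is the standard Whittaker function for weight $k$, producing the exponential $q^\mu$ of the classical Fourier expansion.

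Finally, the Hecke eigenvalue assertion is essentially formal once the divisor-sum formula is in hand: one verifies directly (or recognizes from the Euler product $L(s,\phi)L(s-k+1,\psi)$) that the arithmetic function $\mathfrak m\mapsto \sum_{\mathfrak r\mid\mathfrak m}\phi(\mathfrak m\mathfrak r^{-1})\psi(\mathfrak r)N(\mathfrak r)^{k-1}$ is multiplicative and satisfies, for each prime $\mathfrak p$, the recursion
\[
c(\mathfrak p^{n+1})=c(\mathfrak p)c(\mathfrak p^n)-(\phi\psi)(\mathfrak p)N(\mathfrak p)^{k-1}c(\mathfrak p^{n-1}),
\]
which is exactly the Hecke relation for $T_{\mathfrak p}$ in central character $\phi\psi$. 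Alternatively, one may invoke uniqueness of the spherical vector in the unramified principal series to deduce eigenfunction-hood conceptually and then recover the eigenvalues from the $\mathcal O$-th coefficient normalization $c(\mathcal O,E_k(\phi,\psi))=1$.

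The main obstacle I anticipate is bookkeeping rather than conceptual: matching the adelic normalization of $\Phi_\infty$, the Haar-measure choices in the local intertwining integrals, the $\mathfrak d$-shift built into $K_0(\mathfrak d,\mathcal O)$, and the factor $N(t_\nu)^{-k/2}$ in the definition of normalized Fourier coefficients, so that the constant $2^{-2}$ and the factor $\phi^{-1}(t_\nu)$ in $c_\nu(0,E_k(\phi,\psi))$ appear with the exact form given by Dasgupta--Darmon--Pollack. This normalization step is where the functional equation \eqref{eq0} and the identification of the archimedean gamma factor as $\Gamma((1-s+r)/2)\Gamma((1-s+r')/2)$ at the two real places must be reconciled with the completed $L$-value appearing naturally from the intertwining operator.
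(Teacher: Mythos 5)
The paper does not actually prove Theorem \ref{shimura}; it is imported verbatim from Shimura (Proposition 3.4 of the cited paper) for the construction and Hecke-eigenform property, and from Dasgupta--Darmon--Pollack (their Proposition 2.1) for the constant terms. So there is no in-paper argument to match yours against. Your adelic route --- flat section in the principal series induced from $(\phi,\psi)$, Bruhat decomposition into the constant-term coset and the big cell, local Tate/Whittaker integrals assembling into the divisor sum and the $L$-value --- is a genuinely different (more representation-theoretic) path from the one in the cited sources, which work classically: Shimura builds the Eisenstein series component-by-component on each $\Gamma_\nu$ as a coset sum twisted by the characters and extracts the Fourier expansion by Poisson summation/Lipschitz-type formulas, and Dasgupta--Darmon--Pollack get the constant term from the functional equation of partial zeta functions, essentially the same input as \eqref{eq0}. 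Your approach buys a cleaner conceptual explanation of the eigenform property (uniqueness of the spherical vector, or the Euler product $L(s,\phi)L(s-k+1,\psi)$) and of why the parity condition $(\phi_v\psi_v)(-1)=(-1)^k$ is needed; the classical approach makes the explicit constants ($2^{-2}$, $\phi^{-1}(t_\nu)$, the $\mathfrak d$-shift) easier to pin down, which is precisely the bookkeeping you flag as the remaining work.

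One point in your sketch is a genuine gap rather than bookkeeping: the assertion that ``standard arguments show this value is holomorphic'' at the weight-$k$ point. For $k\geq 3$ the series converges absolutely and this is fine, but the theorem includes $k=2$, where the Eisenstein series must be defined by analytic continuation in $s$ (Hecke's trick) and its value at the holomorphic point is a priori only nearly holomorphic --- over $\mathbb Q$ the analogous $E_2$ famously fails to be holomorphic. Over a real quadratic field the parallel-weight-$2$ Eisenstein series is in fact holomorphic, but this requires a separate argument (e.g., that a nearly holomorphic Hilbert modular form of parallel weight $2$ over a field with two archimedean places is automatically holomorphic, or Wiles's construction, which is exactly why the paper cites Wiles separately in the proof of Corollary \ref{wiles} for the $k=2$ case). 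Without supplying that input, your construction does not yet yield $E_2(\phi,\psi)\in\mathcal M_2(\phi\psi)$.
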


\begin{Cor}\label{wiles}
For any narrow ideal class character $\psi$, the following set
\[\left\{E_k(\psi_1,\psi_2)\colon \psi_1\psi_2=\psi\right\}\]
a basis of $\mathcal E_k$
consisting of Hecke eigenforms.
\end{Cor}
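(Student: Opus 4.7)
The plan is to verify that the proposed $h^+$ Eisenstein series are linearly independent Hecke eigenforms in $\mathcal{E}_k(\psi)$ whose cardinality matches the dimension of that subspace. Membership and the eigenform property are immediate from Theorem~\ref{shimura}: each $E_k(\psi_1,\psi_2)$ with $\psi_1\psi_2=\psi$ is a normalized simultaneous eigenform of the full Hecke algebra with central character $\psi$, and its construction as an Eisenstein series places it inside $\mathcal{E}_k(\psi)$.

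For linear independence I would leverage the explicit formula $c(\mathfrak{p},E_k(\psi_1,\psi_2))=\psi_1(\mathfrak{p})+\psi_2(\mathfrak{p})N(\mathfrak{p})^{k-1}$ to separate the Hecke eigensystems. Given two distinct pairs $(\psi_1,\psi_2)\neq (\psi_1',\psi_2')$ with the same product, set $\eta=\psi_1(\psi_1')^{-1}=\psi_2'\psi_2^{-1}$; this is a nontrivial narrow class character, so some prime $\mathfrak{p}$ satisfies $\eta(\mathfrak{p})\neq 1$ (via Chebotarev, or directly from the surjection of prime ideals onto the narrow class group). Equating $T_\mathfrak{p}$-eigenvalues and substituting $\psi_1=\eta\psi_1'$, $\psi_2'=\eta\psi_2$ forces, after dividing by the nonzero factor $\eta(\mathfrak{p})-1$, the identity $\psi_1'(\mathfrak{p})=\psi_2(\mathfrak{p})N(\mathfrak{p})^{k-1}$. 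The left side has absolute value $1$ while the right side equals $N(\mathfrak{p})^{k-1}\geq 2$ for $k\geq 2$, a contradiction. Hence distinct pairs give distinct eigensystems, and the standard fact that joint eigenvectors with distinct eigensystems are linearly independent finishes this step.

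To match cardinality, I would invoke Theorem~\ref{iso} together with the classical fact that $\dim E_k(\Gamma_\nu)$ equals the number of cusps of $\Gamma_\nu$, which is $h^+$; thus $\dim \mathcal{E}_k=(h^+)^2$, and decomposing further into the $h^+$ central-character eigenspaces yields $\dim\mathcal{E}_k(\psi)=h^+$ uniformly. Since the number of pairs $(\psi_1,\psi_2)$ with $\psi_1\psi_2=\psi$ is exactly $h^+$ (parametrize by $\psi_1$, take $\psi_2=\psi\psi_1^{-1}$), the independent family from the previous step spans. The main technical point requiring care is this equidimensional decomposition across central characters: the parity condition $\psi_v(-1)=(-1)^k$ at the two real places is inherited from the product $\psi_1\psi_2=\psi$, so it either permits every decomposition (when $\psi$ itself satisfies it) or none, and on inadmissible $\psi$ both the construction and $\mathcal{E}_k(\psi)$ are empty, keeping the count honest.
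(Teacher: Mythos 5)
Your overall architecture matches the paper's: exhibit $h^+$ linearly independent Hecke eigenforms inside $\mathcal E_k(\psi)$ and then pin down the dimension from the classical side via Theorem~\ref{iso}. Your linear-independence step is in fact more explicit than the paper's (which simply asserts the $E_k(\psi_1,\psi_2)$ are distinct eigenforms), and your argument there is correct: comparing $T_{\mathfrak p}$-eigenvalues at a prime where $\eta=\psi_1(\psi_1')^{-1}$ is nontrivial and using $|\psi_1'(\mathfrak p)|=1<N(\mathfrak p)^{k-1}$ cleanly separates the eigensystems.

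However, the dimension count contains a genuine error. The number of cusps of $\Gamma_\nu$ is the \emph{class number} $h$, not the narrow class number $h^+$ (the cusps of a Hilbert modular group are parametrized by the ordinary ideal class group), so $\dim E_k(\Gamma_\nu)=h$ and $\dim\mathcal E_k=h\,h^+$, not $(h^+)^2$. Your claim that $\dim\mathcal E_k(\psi)=h^+$ ``uniformly'' over all $h^+$ central characters is moreover inconsistent with your own (correct) closing observation that $\mathcal E_k(\psi)=0$ for the parity-inadmissible $\psi$: whenever $h^+=2h$ (e.g.\ $F=\mathbb Q(\sqrt3)$, where $h=1$ and $h^+=2$), only $h$ of the $h^+$ characters are admissible, so equidistribution over all of them fails, and with your inflated total $(h^+)^2$ the admissible eigenspaces would have dimension $(h^+)^2/h>h^+$ and your $h^+$ eigenforms would not span. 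The correct bookkeeping, as in the paper, is: exactly $h$ characters satisfy $\psi_\infty(-1)=(-1)^k$, each admissible $\psi$ receives at least $h^+$ independent eigenforms, and $h\cdot h^+=\dim\mathcal E_k$ forces equality. Separately, the cusp-counting formula $\dim E_k(\Gamma_\nu)=\#\{\text{cusps}\}$ is only invoked in the paper for $k>2$; the case $k=2$ requires a separate argument (the paper cites Wiles), which your proposal does not address.
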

Let $h$ denote the class number of $F$ in the following proof and note that $h$ stands for a Hecke eigenform elsewhere.
\begin{proof}
When $k=2$, this is done by Wiles \cite[Proposition 1.5]{wiles1986p}. Assume that $k>2$. Since the number of cusps is precisely $h$, by \cite[Theorem in Section 1.8]{garrett1990holomorphic}, we see that $E_k(\Gamma_\nu)$ has dimension $h$ for each $\nu$, so $\text{dim}(\mathcal E_k)=hh^+$, by Theorem \ref{iso}.  

On the other hand, there are precisely $h$ narrow class characters $\psi$ with $\psi_\infty(-1)=(-1)^k$, since it is a lift of a fixed character on
$F^\times F_\mathbb{R}\prod_{\mathfrak p}\mathcal O_\mathfrak{p}^\times$ to $\mathbb A^\times$, where the index is $h$. For each such character $\psi$, by Theorem \ref{shimura}, we have $h^+$ Eisenstein series $E_k(\psi_1,\psi_2)$. Since they are distinct Hecke eigenforms, they are linearly independent. This implies that $\text{dim}(\mathcal E_k(\psi))\geq h^+$, so $\text{dim}(\mathcal E_k)\geq hh^+$. This forces that $\text{dim}(\mathcal E_k(\psi))=h^+$ and the corollary follows.
\end{proof}

We shall need the following elementary lemma on normalized Fourier coefficients of the product of two Hilbert modular forms. 

\begin{Lem}\label{product} For $j=1,2$, let $k_j\in\mathbb Z$ and $\psi_j$ be a narrow ideal class character.
If $f=(f_\nu)\in \mathcal M_{k_1}(\psi_1)$ and $h=(h_\nu)\in \mathcal M_{k_2}(\psi_2)$,  then $f\cdot h=(f_\nu\cdot  h_\nu)\in\mathcal M_{k_1+k_2}(\psi_1\psi_2)$. Moreover, 
\begin{enumerate}
\item For each $\nu$, $c_\nu(0,f\cdot h)=c_\nu(0,f)c_\nu(0,h)$.
\item $
c(\mathcal O,f\cdot h)=c(\mathcal O,f)c_1(0,h)+c(\mathcal O,h)c_1(0,f)$.
\item $c((2),f\cdot h)=c_1(0,h)c((2),f)+c(\mathcal O,f)c(\mathcal O,h)+c_1(0,f)c((2),h)$.
\item If $(2)$ is inert, then for the ideal $(4)$, 
\begin{align*}c((4),f\cdot h)&=c_1(0,h)c((4),f)+c(\mathcal O,f)c((3),h)+c((2),f)c((2),h)+c(\mathcal O,h)c((3),f)\\&+c_1(0,f)c((4),h)
+\left\{\begin{matrix}
2c(\mathfrak d,f)+2c(\mathfrak d,h)&\text{if } D=5\\ 
0&\text{if } D\neq 5\\ 
\end{matrix}\right..\end{align*}
\item For the ideal $(3)$,
\begin{align*}c((3),f\cdot h)&=c_1(0,h)c((3),f)+c(\mathcal O,f)c((2),h)+c(\mathcal O,h)c((2),f)\\&+c_1(0,f)c((3),h)
+\left\{\begin{matrix}
2c(\mathfrak d,f)c(\mathfrak d,h)&\text{if } D=5\\ 
0&\text{if } D\neq 5\\ 
\end{matrix}\right..\end{align*}
\item If $(2)=\mathfrak p^2$ (ramifies) or $(2)=\mathfrak p\mathfrak p'$ (splits), then
\[c(\mathfrak p,f\cdot h)=c_\nu(0,h)c(\mathfrak p,f)+c_\nu(0,f)c(\mathfrak p,h),\quad \mathfrak p\sim t_\nu^{-1}\mathcal O.\]
\end{enumerate}

\end{Lem}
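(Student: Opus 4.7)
The plan is to reduce the assertion to the classical side via the isomorphism of Theorem~\ref{iso}. Writing $f=(f_\nu)$ and $h=(h_\nu)$, one has $(f\cdot h)_\nu=f_\nu h_\nu\in M_{k_1+k_2}(\Gamma_\nu)$ because the isomorphism is compatible with multiplication, and the central character of $f\cdot h$ is $\psi_1\psi_2$ by property~(1) of adelic forms. This already yields the opening assertion $f\cdot h\in\mathcal M_{k_1+k_2}(\psi_1\psi_2)$.

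To prove the Fourier coefficient formulas, I would expand $f_\nu h_\nu$ as a Cauchy product of the two classical Fourier series. The Koecher principle restricts each summand to $\mu_j=0$ or $\mu_j\gg 0$ in $\mathfrak c_\nu$, giving
\[
a_\nu^{fh}(\mu)=\sum_{\substack{\mu_1+\mu_2=\mu\\ \mu_j\in\mathfrak c_\nu,\ \mu_j=0\text{ or }\mu_j\gg 0}} a_\nu^f(\mu_1)\,a_\nu^h(\mu_2).
\]
Dividing by $N(t_\nu)^{(k_1+k_2)/2}$ converts each surviving product into the product of normalized coefficients indexed by $\mathfrak m_j=\mu_j\mathfrak c_\nu^{-1}$, and the unit invariance $a_\nu(\varepsilon\mu)=N(\varepsilon)^{k/2}a_\nu(\mu)=a_\nu(\mu)$ for $\varepsilon\in\mathcal O^{\times+}$ guarantees that this translation does not depend on the choice of totally positive generator.

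What remains is a pure decomposition count, and the universal tool is the elementary trace inequality: every $\alpha\in\mathcal O^+$ satisfies $\mathrm{Tr}(\alpha)\ge 2\sqrt{N(\alpha)}\ge 2$, with equality iff $\alpha=1$. Applied to both summands, this forces $\mathrm{Tr}(\mu)\ge 4$ whenever $\mu_1$ and $\mu_2$ are both nonzero, which handles items (1)-(3) at once: for $\mu=0$ or $\mu=1$ no nontrivial splitting exists, and for $\mu=2$ the only one is $1+1$. For item (6), where $\mathfrak p$ has norm $2$, any totally positive generator of $\mathfrak p\mathfrak c_\nu$ satisfies a trace bound that (via the same inequality applied to $\mu_1,\mu_2\in\mathfrak c_\nu^+$) leaves only the trivial decompositions.

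Items (4) and (5) are the delicate cases and constitute the main obstacle. The fundamental totally positive unit of $\mathbb Q(\sqrt 5)$ is $\phi^2=(3+\sqrt 5)/2$, anomalously small with trace $3$ and norm $1$; it produces extra totally positive summands of trace $3$ (the units $\phi^{\pm 2}$) and, for $\mu=4$, also of trace $5$ (the elements $\phi^{\pm 1}\sqrt 5$, which are totally positive generators of the different $\mathfrak d=(\sqrt 5)$). Enumerating all decompositions of $\mu=3$ and $\mu=4$ in $\mathcal O_F$ and translating the extra contributions through the unit invariance yields the ``$D=5$'' terms. To justify the ``$0$'' branches, I would check that no other real quadratic field admits a totally positive integer of trace $3$ with norm $1$ or of trace $5$ with norm $5$: writing $T^2-4N$ as $d$ times a square with $T\in\{3,5\}$ and small $N$ forces $d=5$ in every case. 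This explicit classification of small-trace totally positive elements is the crux of the proof, and the bookkeeping between $\mathcal O$-generators and $\mathfrak d$-generators is where care is most needed.
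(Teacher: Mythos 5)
Your overall strategy coincides with the paper's: pass to the classical components, expand $f_\nu h_\nu$ as a Cauchy product, and reduce each formula to an enumeration of the decompositions $\mu=\mu_1+\mu_2$ into summands that are zero or totally positive, with the unit invariance $a_\nu(\varepsilon\mu)=N(\varepsilon)^{k/2}a_\nu(\mu)$ doing the bookkeeping. For items (1)--(3) your trace inequality $\mathrm{Tr}(\alpha)\geq 2\sqrt{N(\alpha)}\geq 2$ is equivalent to the paper's norm computation and works. For items (4)--(5) your classification of totally positive integers of trace $3$ and $5$ via $T^2-4N=d b^2$ is a clean substitute for the paper's coordinate analysis with $a_j+b_j\tfrac{1+\sqrt D}{2}$, and it does isolate $d=5$ correctly. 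However, for item (6) your argument has a real flaw: a totally positive generator $\mu$ of $t_\nu\mathfrak p$ is only determined up to multiplication by totally positive units, so its trace is \emph{not} bounded and ``any totally positive generator satisfies a trace bound'' is false. What is unit-invariant is the norm: $N(\mu)=2N(t_\nu)$, while any nontrivial decomposition $\mu=\mu_1+\mu_2$ with $\mu_j\in(t_\nu\mathcal O)^+$ forces $N(\mu_1+\mu_2)\geq\bigl(\sqrt{N(\mu_1)}+\sqrt{N(\mu_2)}\bigr)^2\geq 4N(t_\nu)>2N(t_\nu)$, a contradiction. This is exactly the inequality the paper uses, and you should replace your trace bound by it.

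A second point you should not pass over silently: your own (correct) enumeration for $\mu=3$ shows that the only extra decomposition when $D=5$ is $3=\tfrac{3+\sqrt5}{2}+\tfrac{3-\sqrt5}{2}$, where both summands are totally positive \emph{units}. After applying unit invariance this contributes $2c(\mathcal O,f)c(\mathcal O,h)$ (i.e.\ $2$ for normalized eigenforms), not $2c(\mathfrak d,f)c(\mathfrak d,h)$ as printed in item (5); the generators of $\mathfrak d$, namely $\tfrac{5\pm\sqrt5}{2}$, enter only in the $\mu=4$ case. The printed term in (5) appears to be a misprint --- the paper's numerical table (e.g.\ $c((3),h_6)=90$ in Lemma \ref{eigen}) is consistent only with the extra term $2c(\mathcal O,f)c(\mathcal O,h)$ --- so your enumeration actually proves the corrected formula, and you should say so explicitly rather than claim that it ``yields'' the term as stated.
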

\begin{proof} 
Since it is clear that $f_\nu\cdot h_\nu\in M_{k_1+ k_2}(\Gamma_\nu)$, under the isomorphism, the tuple $(f_\nu\cdot h_\nu)$ determines a Hilbert modular form in $\mathcal M_{k_1+ k_2}$.  On the other hand, the function $f\cdot h$ is determined by
\[(f\cdot h)(\alpha_\nu g_\infty)=f(\alpha_\nu g_\infty)h(\alpha_\nu g_\infty)=f_\nu|_{k_1}g_\infty \cdot h_\nu|_{{k}_2}g_\infty=(f_\nu\cdot h_\nu)|_{k_1+k_2}g_\infty,\]
from which it follows that $f\cdot h=(f_\nu\cdot h_\nu)\in\mathcal M_{{k}_1+k_2}$, hence in $\mathcal M_{k_1+ k_2}(\psi_1\psi_2)$.

For ease of notations, we assume $t_1=1$, so $\mathfrak c_1=\mathcal O$. The formula for the constant Fourier coefficients follows directly from the definition and that for the $\mathcal O$-th terms follows from the fact that $1$ is minimal in the set $\mathcal O^+$ (of totally positive integers) under the partial order $\gg$. Indeed, for the component $f_1\cdot h_1$, the congruence subgroup is $\Gamma_0(\mathfrak d,\mathcal O)$ and $\mathfrak d^\vee=\mathcal O$ is the lattice where the Fourier expansion sums. Moreover, if $1=\mu_1+\mu_2$ with $\mu_1,\mu_2\in \mathcal O^+$, then 
\[1=(\mu_1+\mu_2)(\mu_1'+\mu_2')>\mu_1\mu_1'+\mu_2\mu_2'\geq 1+1= 2;\]
a contradiction and the formula follows.
For the ideal $(2)$, $\nu=1$. Then the Fourier expansion sums over $\mathcal O$ and we show that if $2=\mu_1+\mu_2$ inside $\mathcal O^+$, then we must have $\mu_1=\mu_2=1$. We see that \[4=N(2)=N(\mu_1+\mu_2)\geq N(\mu_1)+N(\mu_2)+2\sqrt{N(\mu_1)N(\mu_2)}\geq 1+1+2=4,\]
which forces $N(\mu_1)=N(\mu_2)=1$ and $\mu_1\mu_2'=\mu_1'\mu_2$. It follows that $\mu_1=\mu_2=1$.

We now consider the ideal $(4)$ when $(2)$ is inert. We first note that $D=d\equiv 1\imod 4$. Assume $4=\mu_1+\mu_2$ with $\mu_1,\mu_2\gg 0$ and
\[\mu_j=a_j+b_j\frac{1+\sqrt D}{2},\quad a_j,b_j\in\mathbb Z, j=1,2.\]
Since $4=\mu_1+\mu_2$, we have $b_1=-b_2$ and $a_1+a_2=4$. Moreover, since $\mu_j\gg 0$, we have
\[a_j+\frac{b_j}{2}>\frac{|b_j|}{2}\sqrt{D}, \quad j=1,2.\]
If $b_1=b_2=0$, then we have the three possibilities $(1,3)$, $(2,2)$ and $(3,1)$ for the pair $(\mu_1,\mu_2)$. Now we may assume that $b_1=-b_2>0$, and
the case when $b_1<0$ follows by switching $\mu_1$ and $\mu_2$. If $D\neq 5$, then $D\geq 13$. It follows that $2a_1>\sqrt {13}-1> 2$ and $2a_2>\sqrt {13}+1>4$, so $a_1\geq 2$ and $a_2\geq 3$. But $a_1+a_2=4$ and we have a contradiction. So if $D\neq 5$, we only have the above three possibilities.
If $D=5$, we first note that $b_1=1$, since otherwise $a_1>\sqrt{5}-1>1$ and $a_2>\sqrt{5}+1>3$. This implies that $a_1>0$ and $a_2>1$. Therefore, we have only two cases $(a_1,a_2)=(1,3)$ or $(2,2)$. So in total we have four more pairs for $(\mu_1,\mu_2)$:
\[\left(\frac{5+\sqrt{5}}{2},\frac{3-\sqrt{5}}{2}\right),\quad \left(\frac{5-\sqrt{5}}{2},\frac{3+\sqrt{5}}{2}\right),\quad \left(\frac{3+\sqrt{5}}{2},\frac{5-\sqrt{5}}{2}\right),\quad \left(\frac{3-\sqrt{5}}{2},\frac{5+\sqrt{5}}{2}\right).\]
This completes the case by noting that \[\left(\frac{5+\sqrt{5}}{2}\right)=\left(\frac{5-\sqrt{5}}{2}\right)=\mathfrak d, \quad\frac{3+\sqrt{5}}{2}, \frac{3-\sqrt{5}}{2}\in \mathcal O^\times.\]
The ideal $(3)$ can be taken care of similarly.

Now assume that $(2)=\mathfrak p^2$ ramifies or $(2)=\mathfrak p\mathfrak p'$ splits, and $\mathfrak p=t_\nu^{-1}(\mu)$ with $\mu\in (t_\nu\mathcal O)^+$. Then the Fourier expansion sums over $t_\nu\mathcal O$ and we show that $\mu$ is minimal among the totally positive elements in $t_\nu\mathcal O$. Indeed, assume otherwise and $\mu=\mu_1+\mu_2$ with $\mu_1,\mu_2$ totally positive. Note first that $N(\mu)=N(t_\nu)N(\mathfrak p)=2N(t_\nu)$. But
\[2N(t_\nu)=N(\mu_1+\mu_2)\geq N(\mu_1)+N(\mu_2)+2\sqrt{N(\mu_1\mu_2)}\geq 4N(t_\nu),\] which is impossible.
So $\mu$ is minimal and the formula for $c(\mathfrak p,f\cdot h)$ follows.
\end{proof}

We now prove Theorem~\ref{thm}. We separate the assertion of Theorem~\ref{thm} in two separate assertions. We assume that $f$ and $h$ are normalized Hecke eigenforms with the set of normalized Fourier coefficients \[\{c(\mathfrak m,f), c_\nu(0,f)\}\quad\text{ and }\quad\{c(\mathfrak m,h), c_\nu(0,h)\}\] respectively. Note that we are in full level case and all Hecke eigenforms are normalizable.
Clearly, we can divide it into two cases: $c_1(0,f)c_1(0,h)\neq 0$ or $c_1(0,f)c_1(0,h)=0$. Therefore, we have to prove the following two theorems, whose proof will be given in the next two sections.

\begin{Thm}\label{thm1} Among the solutions to the equation $g=f\cdot h$ in the Theorem~\ref{thm}, there are finitely many solutions with $c_1(0,f)c_1(0,h)\neq 0$.
\end{Thm}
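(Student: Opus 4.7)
The hypothesis $c_1(0,f)c_1(0,h)\neq 0$ forces neither $f$ nor $h$ to be cuspidal, and since the Hecke algebra preserves the orthogonal decomposition $\mathcal M_k=\mathcal E_k\oplus\mathcal S_k$, both $f$ and $h$ lie in $\mathcal E_k$. Corollary~\ref{wiles} lets us write $f=E_{k_1}(\phi_1,\phi_2)$ and $h=E_{k_2}(\chi_1,\chi_2)$. Moreover $c_1(0,g)=c_1(0,f)c_1(0,h)\neq 0$ by Lemma~\ref{product}(1), so $g$ too is Eisenstein, hence $g=C\cdot E_k(\psi_1,\psi_2)$ for some nonzero scalar $C$, with $k=k_1+k_2$ and $\psi_1\psi_2=\phi_1\phi_2\chi_1\chi_2$. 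The problem becomes to effectively bound the admissible triples $(k_1,k_2,D)$.

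\textbf{$L$-value identity.} Set $L_1=L(\phi_1^{-1}\phi_2,1-k_1)$, $L_2=L(\chi_1^{-1}\chi_2,1-k_2)$ and $L_g=L(\psi_1^{-1}\psi_2,1-k)$. Substituting Theorem~\ref{shimura} into parts~(1) and~(2) of Lemma~\ref{product} gives
\begin{align*}
C\,\psi_1^{-1}(t_\nu)\,L_g &= \tfrac{1}{4}\phi_1^{-1}(t_\nu)\chi_1^{-1}(t_\nu)\,L_1 L_2\qquad\text{for every }\nu,\\
C &= \tfrac{1}{4}(L_1+L_2).
\end{align*}
Letting $\nu$ range over narrow class representatives forces the character relation $\psi_1=\phi_1\chi_1$ on the narrow class group (hence also $\psi_2=\phi_2\chi_2$), after which the two equations collapse to the single identity
\[
(L_1+L_2)\,L_g = L_1 L_2,\qquad\text{equivalently}\qquad \frac{1}{L_g}=\frac{1}{L_1}+\frac{1}{L_2}.
\]

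\textbf{Asymptotics and a second Hecke relation.} From \eqref{eq0} and \eqref{eq1.4}, $|L_j|\asymp(D/4\pi^2)^{k_j-1/2}\Gamma(k_j)^2$ and $|L_g|\asymp(D/4\pi^2)^{k-1/2}\Gamma(k)^2$ up to factors bounded in $k_j$, $D$ and the character, so the identity above forces
\[
|L_1+L_2|\asymp D^{-1/2}\bigl(\Gamma(k_1)\Gamma(k_2)/\Gamma(k)\bigr)^2,
\]
extreme cancellation compared to either individual $|L_j|$. I would turn this into a contradiction using a second Fourier coefficient relation from Lemma~\ref{product}---part~(6) at a prime $\mathfrak p$ above $2$ when $2$ splits or ramifies in $F$, and part~(3) at the ideal $(2)$ when $2$ is inert---which reads
\[
C\bigl[\psi_1(\mathfrak p)+\psi_2(\mathfrak p)N(\mathfrak p)^{k-1}\bigr]=c_\nu(0,h)\bigl[\phi_1(\mathfrak p)+\phi_2(\mathfrak p)N(\mathfrak p)^{k_1-1}\bigr]+c_\nu(0,f)\bigl[\chi_1(\mathfrak p)+\chi_2(\mathfrak p)N(\mathfrak p)^{k_2-1}\bigr].
\]
The cancellation bound makes the left-hand side negligible for large $k$ and $D$, so the two leading-order terms on the right must themselves almost cancel, giving $|L_1|/|L_2|\approx N(\mathfrak p)^{k_1-k_2}$. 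But the functional equation forces $|L_1|/|L_2|\asymp(D/4\pi^2)^{k_1-k_2}\Gamma(k_1)^2/\Gamma(k_2)^2$, and comparing these two rates yields joint effective upper bounds on both $D$ and $k$.

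\textbf{Main obstacle.} The delicate point is promoting the heuristic cancellation $L_1+L_2\approx 0$ to rigorous effective inequalities. The root numbers $W(\cdot)$ in \eqref{eq0} are generically complex, so one cannot reduce to a sign argument and must track arguments as well as moduli of complex $L$-values. A case split by the splitting behaviour of $2$ in $F$---augmented, when $D=5$, by the extra $c(\mathfrak d,\cdot)$ corrections from parts~(4)--(5) of Lemma~\ref{product}---is needed to make the bookkeeping explicit in every remaining case.
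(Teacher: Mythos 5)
Your setup coincides with the paper's: both factors are Eisenstein, the constant terms give the character relations and the reciprocal identity $1/L_g=1/L_1+1/L_2$ (this is \eqref{eq4.2}), and one then brings in a second Fourier-coefficient relation at the primes above $2$, split by the behaviour of $2$ in $F$. That first half is correct. The gap is in how you propose to close the argument, and it sits in the only case where the second relation is actually needed. When $k_1\neq k_2$ the reciprocal identity alone suffices: by \eqref{eq1.4} one has $|L_1+L_2|\geq |L_1|-|L_2|\gg |L_1|$ once $k_1$ is large, while the identity forces $|L_1+L_2|=|L_1L_2|/|L_g|\ll D^{-1/2}\bigl(\Gamma(k_1)\Gamma(k_2)/\Gamma(k)\bigr)^2$, which is incompatible; no second relation is required. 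When $k_1=k_2$, your proposed conclusion --- that the leading terms on the right of the second relation must nearly cancel, ``giving $|L_1|/|L_2|\approx N(\mathfrak p)^{k_1-k_2}$'', to be compared with the functional-equation rate $(D/4\pi^2)^{k_1-k_2}\Gamma(k_1)^2/\Gamma(k_2)^2$ --- degenerates to the tautology $1\approx 1$ and yields no bound whatsoever. Worse, the cancellation $L_1\approx -L_2$ already forced by \eqref{eq4.2} is perfectly consistent with the near-cancellation of those leading terms, so tracking arguments of the complex $L$-values more carefully will not rescue the heuristic; what you flag as the ``main obstacle'' is in fact the entire content of the proof in this case. (A minor further slip: your unified display drops the term $c(\mathcal O,f)c(\mathcal O,h)$ from part (3) of Lemma~\ref{product} in the inert case, though it is asymptotically harmless.)

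The paper closes the equal-weight case by a different mechanism: substitute $1/L_1+1/L_2=1/L_g$ back into the second relation so that the terms linear in the $1/L_j$ cancel \emph{exactly}, leaving in the inert case the clean identity $\frac{4}{L_1L_2}=\frac{4^{2k_1-1}-4^{k_1-1}}{L_g}$ of \eqref{3}; by \eqref{eq1.4} and Stirling's bound on the central binomial coefficient, $|4L_g/(L_1L_2)|\geq C\sqrt D\,(k_1-1)\,4^{2k_1}$ while the other side is at most $4^{2k_1-1}$, which bounds $k_1$ and then $D$. In the split and ramified cases the analogous elimination gives $B/L_g=A/L_1$ as in \eqref{eq4.4}, where $A$ is a difference of two character values times $2^{k_1-1}$ and can a priori be small; one needs the lower bound $|A|\geq CD^{-1/2}$ of Lemma~\ref{lem}, proved via the class number bound $h^+\ll\sqrt D$, before the growth-rate comparison applies. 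Neither the exact elimination step nor the need for a lower bound on $|A|$ appears in your sketch, so the equal-weight case remains unproved as written.
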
 

\begin{Thm}\label{thm2} Among the solutions to the equation $g=f\cdot h$ in the Theorem~\ref{thm}, there are finitely many solutions with $c_1(0,f)c_1(0,h)=0$.
\end{Thm}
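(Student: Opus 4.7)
The plan is as follows. By Lemma~\ref{product}(2), normalization of $g$ gives $1 = c(\mathcal{O}, g) = c_1(0, f) + c_1(0, h)$; combined with $c_1(0, f) c_1(0, h) = 0$ this forces exactly one of $f, h$ to be cuspidal. I may assume $h$ cuspidal and $f = E_{k_1}(\phi, \psi)$ for narrow ideal class characters $\phi, \psi$ (Corollary~\ref{wiles}). Setting $A := c_1(0, f) = \tfrac{1}{4} L(\phi^{-1}\psi, 1-k_1) \neq 0$ by Theorem~\ref{shimura}, the equation $g = f\cdot h$ becomes $g = \lambda\, fh$ with $\lambda = 1/A$, and $g$ is cuspidal.

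I next split into cases by the splitting type of $(2)$ in $F$. If $(2) = \mathfrak{p}\mathfrak{p}'$ splits, computing $c((2), g)$ via Hecke multiplicativity at the coprime primes $\mathfrak p, \mathfrak p'$ combined with Lemma~\ref{product}(6) gives $c((2), g) = c((2), h)$, using that $[\mathfrak p][\mathfrak p'] = 1$ in the narrow class group and hence $c_\nu(0, f) c_{\nu'}(0, f) = A^2$. But Lemma~\ref{product}(3) yields $c((2), g) = \lambda + c((2), h)$, forcing $\lambda = 0$, a contradiction. If $(2) = \mathfrak{p}^2$ ramifies, the same type of comparison with $[\mathfrak p]^2 = 1$ gives $\lambda = \psi_h(\mathfrak p) 2^{k_2-1} - \psi_g(\mathfrak p) 2^{k_1+k_2-1}$ with $\psi_h(\mathfrak p), \psi_g(\mathfrak p) \in \{\pm 1\}$, so $|A| = 1/|\lambda| \leq 1/(3 \cdot 2^{k_2-1})$; combining with the lower bound on $|L(\phi^{-1}\psi, 1-k_1)|$ from \eqref{eq1.4} bounds $(D, k_1, k_2)$ to finitely many values, hence $(f, h)$ to finitely many choices.

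The main case is $(2) = \mathfrak{p}$ inert, which forces $D \equiv 5 \pmod 8$ and $[\mathfrak p] = 1$. I compute $c(\mathfrak p^2, g) = c((4), g)$ two ways: via the Hecke relation \eqref{eq2.2}, using $c(\mathfrak p, g) = \lambda + c(\mathfrak p, h)$ from Lemma~\ref{product}(3); and via the product formula Lemma~\ref{product}(4). Cancelling the common $c(\mathfrak p^2, h)$ term yields
\begin{equation*}
A \cdot 4^{k_2-1}(1 - 4^{k_1}) = c((3), h) + (4^{k_1-1} - 1) c(\mathfrak p, h) + c((3), f) + \text{(correction when $D = 5$)} - \lambda.
\end{equation*}
Applying the Kim--Sarnak bound \eqref{eq2.3} to the Fourier coefficients of $h$ controls the right-hand side by $C_1 \cdot 3^{k_2} + |\lambda|$ plus terms independent of $k_2$, where $C_1$ depends only on $D$ (through the factorization of $(3)$). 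The left-hand side has magnitude at least $15\,|A| \cdot 4^{k_2-1}$ for $k_1 \geq 2$. Substituting $|\lambda| = 1/|A|$ produces a quadratic inequality in $|A|$ whose asymptotic solution is $|A| \leq C_3 \cdot (3/4)^{k_2}$ with $C_3$ depending on $k_1, D$. Combined with the lower bound $|A| \geq \mathrm{const} \cdot (D/4\pi^2)^{k_1 - 1/2} \Gamma(k_1)^2 \zeta(4k_1)/\zeta(k_1)^2$ from \eqref{eq1.4}, this simultaneously bounds $D$, $k_1$, and $k_2$. For each such triple $\mathcal{S}_{k_2}$ is finite-dimensional, so only finitely many normalized cuspidal Hecke eigenforms $h$ occur, completing the finiteness argument.

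The hardest part will be the asymptotic bookkeeping in the inert case: tracking the $k_1$- and $D$-dependent constants from Kim--Sarnak (including the $D = 5$ correction, whose leading contribution is of order $\sqrt{5}^{\,k_2}$ and is dominated by $3^{k_2}$), ensuring that the dominant $4^{k_1 + k_2 - 1}$ term on the left cannot be matched by the subdominant $3^{k_2}$ and $|\lambda| = 1/|A|$ contributions on the right, and extracting the effective quadratic bound. The interplay of this upper bound on $|A|$ with the functional-equation lower bound is what delivers the joint bound on $D, k_1, k_2$.
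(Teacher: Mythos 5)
Your proposal is correct in substance, and the case analysis by the splitting type of $(2)$, together with the identity you extract in each case (the contradiction $1/c_1(0,f)=0$ in the split case, the formula $1/c_1(0,f)=\psi_2(\mathfrak p)2^{k_2-1}(1-(\phi_1\psi_1)(\mathfrak p)2^{k_1})$ in the ramified case, and the $(4)$-coefficient equation in the inert case) coincides with what the paper does. Where you genuinely diverge is in how finiteness in $D$ and $k_1$ is obtained. The paper first observes, from $c((2),g)/c_1(0,f)=1/c_1(0,f)+c((2),h)$ and Shimura's rationality theorem, that $1/c_1(0,f)$ must be an algebraic integer; since \eqref{eq1.4} bounds all Galois conjugates of $L(1-k_1,\phi_1^{-1}\psi_1)$ uniformly from below, every conjugate of $1/c_1(0,f)$ has absolute value less than $1$ once $k_1$ or $D$ is large, which is impossible for a nonzero algebraic integer. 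This bounds $(D,k_1)$ in one stroke, independently of the splitting of $(2)$, and reduces the remainder to bounding $k_2$ for each of the finitely many resulting $f$ --- at which point the constants in the inert-case estimate really are constants. You instead play the analytic upper bound on $|c_1(0,f)|$ extracted from each case against the lower bound of \eqref{eq1.4} to bound $D$, $k_1$, $k_2$ simultaneously, avoiding the integrality input altogether. This works, but it is exactly what makes your ``asymptotic bookkeeping'' delicate: in the inert case the dominant term of your upper bound on $|A|$ is of size $2^{-k_2}+(9/4)^{k_1}4^{-k_2}$ (the $(9/4)^{k_1}$ coming from $c((3),f)$ and the $2^{-k_2}$ from the term $(4^{k_1-1}-1)c((2),h)$ divided by $4^{k_1+k_2-1}$), not $(3/4)^{k_2}$, and you must verify that this at-most-exponential growth in $k_1$ is beaten by the factorial factor $\Gamma(k_1)^2(D/4\pi^2)^{k_1-1/2}$ in the lower bound. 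It is, so your argument closes, but the paper's integrality trick sidesteps this bookkeeping and is the cleaner route.
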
 

\section{Proof of Theorem \ref{thm1}} Assume that $f$ and $h$ are normalized Hecke eigenforms with $c_1(0,f)c_1(0,h)\neq 0$ and $g=f\cdot h$ is also a Hecke eigenform.  By Theorem \ref{shimura} and Corollary \ref{wiles}, they must be Eisenstein series and we may assume that
\[f=E_{k_1}(\phi_1,\psi_1)\quad \text{and}\quad h=E_{k_2}(\phi_2,\psi_2)\]
with $\phi_j$ and $\psi_j$ being narrow ideal class characters, $j=1,2$. Therefore, by Theorem \ref{shimura}, we have
\[c_1(0,f)=2^{-2}L(1-k_1,\phi_1^{-1}\psi_1) \quad\text{and}\quad c_1(0,h)=2^{-2}L(1-k_2,\phi_2^{-1}\psi_2).\] By Lemma \ref{product} we have $c_\nu(0,g)=c_\nu(0,f)c_\nu(0,h)$ and
\[c(\mathcal O,g)=c(\mathcal O,f)c_1(0,h)+c(\mathcal O,h)c_1(0,f)=c_1(0,h)+c_1(0,f).\]

Since $g$ is a Hecke eigenform, up to a nonzero scalar, $g$ is equal to $E_{k_1+k_2}(\phi,\psi)$ for some $\phi$ and $\psi$. By comparing the $\mathcal O$-th terms, we have
\[g=(c_1(0,f)+c_1(0,h))E_{k_1+k_2}(\phi,\psi).\] Then from the $\nu$-th constant terms, we derive that
\[\frac{c_\nu(0,f)c_\nu(0,h)}{c_1(0,f)+c_1(0,h)}=c_\nu(0,E_{k_1+k_2}(\phi,\psi)).\]
It follows, by Theorem \ref{shimura}, that for each $\nu$,
\[\phi_1(t_\nu)\phi_2(t_\nu)\left(\frac{1}{L(1-k_1,\phi_1^{-1}\psi_1)}+\frac{1}{L(1-k_2,\phi_2^{-1}\psi_2)}\right)=\phi(t_\nu)\frac{1}{L(1-k_1-k_2,\phi^{-1}\psi)}.\]
By considering the case $\nu=1$, we see that \begin{equation}\label{eq1}\phi_1(t_\nu)\phi_2(t_\nu)=\phi(t_\nu),\text{ for each }\nu,\end{equation}
\begin{equation}\label{eq4.2}
\frac{1}{L(1-k_1,\phi_1^{-1}\psi_1)}+\frac{1}{L(1-k_2,\phi_2^{-1}\psi_2)}=\frac{1}{L(1-k_1-k_2,\phi^{-1}\psi)}.
\end{equation}
It follows from \eqref{eq1} that $\phi_1\phi_2=\phi$, so $\psi=\psi_1\psi_2$, since $\phi\psi=\phi_1\phi_2\psi_1\psi_2$.

We now treat the case when $k_1\neq k_2$. We may assume that $k_1>k_2$. First note that, if $k_1$ is large, then
\[\left|\frac{L(1-k_1,\phi_1^{-1}\psi_1)}{L(1-k_2,\phi_2^{-1}\psi_2)}\right|\geq \left(\frac{D}{4\pi^2}\right)^{k_1-k_2}\frac{\Gamma(k_1)^2}{\Gamma(k_2)^2}\frac{\zeta(4k_1)}{\zeta(k_1)^2\zeta(k_2)^2}>\left(\frac{1}{4\pi^2}\right)^{k_1-k_2}\frac{\Gamma(k_1)^2}{\Gamma(k_2)^2}\frac{\zeta(4k_1)}{\zeta(k_1)^2\zeta(k_2)^2},\]
which in turn is bigger than $1$; indeed, if $k_2\geq k_1/2$, then 
\[\left(\frac{1}{4\pi^2}\right)^{k_1-k_2}\frac{\Gamma(k_1)^2}{\Gamma(k_2)^2}\geq \left(\frac{k_2^2}{4\pi^2}\right)^{k_1-k_2}>2,\] while if $k_2< k_1/2$, then
\[\left(\frac{1}{4\pi^2}\right)^{k_1-k_2}\frac{\Gamma(k_1)^2}{\Gamma(k_2)^2}\geq \left(\frac{1}{4\pi^2}\right)^{k_1-k_2}\frac{(k_1-1)!^2}{(k_2-1)!(k_1-k_2)!}\geq\frac{(k_1-1)!}{(4\pi^2)^{k_1-k_2}}>2.\] 
From this and by \eqref{eq1.4} and \eqref{eq4.2}, if $k_1$ is large, we have for some constant $C>0$ independent of $k_1,k_2$ and $D$,
\begin{align*}1&=\left|(L(1-k_1,\phi_1^{-1}\psi_1)+L(1-k_2,\phi_2^{-1}\psi_2))\frac{L(1-k_1-k_2,\phi^{-1}\psi)}{L(1-k_1,\phi_1^{-1}\psi_1)L(1-k_2,\phi_2^{-1}\psi_2)}\right|\\
&\geq  C\left(\frac{D}{4\pi^2}\right)^{k_2}\frac{\Gamma(k_1+k_2)^2}{\Gamma(k_1)^2}\left|\left(\frac{Dk_2^2}{4\pi^2}\right)^{k_1-k_2}\frac{\zeta(4k_1)}{\zeta^2(k_1)\zeta^2(k_2)}-1\right|\geq  C\left(\frac{D}{4\pi^2}\right)^{k_2}\frac{\Gamma(k_1+k_2)^2}{\Gamma(k_1)^2},
\end{align*} while this last expression can be arbitrarily large if $k_1$ is large since $\Gamma(k_1+k_2)\geq k_1^{k_2}\Gamma(k_2)$. 
For for each fixed pair $(k_1,k_2)$, this is also large, thus exceeds $1$ if $D$ is large. This finishes the case when $k_1\neq k_2$.

For the rest of the proof of Theorem \ref{thm1}, we assume that $k_1=k_2$, so $k=2k_1$.
Let us consider more normalized Fourier coefficients to complete the proof. 

\subsection{The case when $(2)=\mathfrak p$ is inert} In particular, $\mathfrak p$ is trivial in the narrow ideal class, so all of the narrow ideal class characters are trivial at $\mathfrak p$. In this case, by Lemma \ref{product}, after simplification and setting $k_1=k_2$, we have
\[\frac{4}{L(1-k_1,\phi_1^{-1}\psi_1)L(1-k_1,\phi_2^{-1}\psi_2)}+\frac{1+4^{k_1-1}}{L(1-k_1,\phi_1^{-1}\psi_1)}+\frac{1+4^{k_1-1}}{L(1-k_1,\phi_2^{-1}\psi_2)}=\frac{1+4^{2k_1-1}}{L(1-2k_1,\phi^{-1}\psi)},\] which, together with \eqref{eq4.2}, implies that
\begin{equation}\label{3}
\frac{4^{2k_1-1}-4^{k_1-1}}{L(1-2k_1,\phi^{-1}\psi)}=\frac{4}{L(1-k_1,\phi_1^{-1}\psi_1)L(1-k_1,\phi_2^{-1}\psi_2)}.
\end{equation} However, by \eqref{eq1.4}, for a constant $C>0$ that are independent of $k_1$ and $D$, we have
\[\left|\frac{4L(1-2k_1,\phi^{-1}\psi)}{L(1-k_1,\phi_1^{-1}\psi_1)L(1-k_1,\phi_2^{-1}\psi_2)}\right|\geq C\sqrt{D}(k_1-1)4^{2k_1}\geq C(k_1-1)4^{2k_1},\]
by the following Stirling's bound on the binomial coefficients
\[\begin{pmatrix}
2n\\n
\end{pmatrix}\geq n^{-\frac{1}{2}}2^{2n-1}.\] Therefore, this, together with \eqref{3}, implies that $k_1$ is bounded. For each such $k_1$, above inequalities also implies that $D$ is bounded, which finishes the proof in this case.

\subsection{The case when $(2)=\mathfrak p^2$ or $(2)=\mathfrak p\mathfrak p'$} Assume $\mathfrak p=t_\nu^{-1}(\mu)$. Again by Lemma \ref{product}, we have
\[\phi_1(t_\nu)\frac{\phi_1(\mathfrak p)+\psi_1(\mathfrak p)2^{k_1-1}}{L(1-k_1,\phi_1^{-1}\psi_1)}+\phi_2(t_\nu)\frac{\phi_2(\mathfrak p)+\psi_2(\mathfrak p)2^{k_1-1}}{L(1-k_1,\phi_2^{-1}\psi_2)}=\phi(t_\nu)\frac{\phi(\mathfrak p)+\psi(\mathfrak p)2^{2k_1-1}}{L(1-2k_1,\phi^{-1}\psi)},\] which, together with \eqref{eq4.2}, implies that
\begin{equation}\label{eq4.4}
\frac{B}{L(1-2k_1,\phi^{-1}\psi)}=\frac{A}{L(1-k_1,\phi_1^{-1}\psi_1)}, \quad \text{with}
\end{equation}
\begin{equation}
B=\phi(t_\nu)\psi(\mathfrak p)2^{2k_1-1}-\phi_2(t_\nu)\psi_2(\mathfrak p)2^{k_1-1},\quad
A=\phi_1(t_\nu)\psi_1(\mathfrak p)2^{k_1-1}-\phi_2(t_\nu)\psi_2(\mathfrak p)2^{k_1-1},
\end{equation}
since $t_\nu\mathfrak p=(\mu)$ and $\phi(t_\nu)=\phi(\mathfrak p)$ and the same holds for any narrow ideal class character.

\begin{Lem}\label{lem}
There exists a constant $C>0$, such that $|A|\geq CD^{-\frac{1}{2}}$ for all $D, k_1,\phi_j,\psi_j$, $j=1,2$.
\end{Lem}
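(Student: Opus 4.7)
The plan is to exploit the cyclotomic structure of $A$. Setting $\zeta_j := \phi_j(t_\nu)\psi_j(\mathfrak p)$ for $j=1,2$, we have $A = 2^{k_1-1}(\zeta_1 - \zeta_2)$. Because $\mu \gg 0$ and $t_\nu\mathfrak p = (\mu)$, any narrow ideal class character $\chi$ satisfies $\chi(t_\nu)\chi(\mathfrak p) = \chi((\mu)) = 1$; hence $\zeta_j = (\phi_j^{-1}\psi_j)(\mathfrak p)$, a root of unity of order dividing the narrow class number $h^+$ of $F$.

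The first step will be to show $A \neq 0$. Running the same substitution through $B$ gives $B = 2^{k_1-1}\zeta_2\bigl(2^{k_1}\zeta_1 - 1\bigr)$; since $|\zeta_1|=1$ while $|2^{k_1}\zeta_1|=2^{k_1}\geq 2$, the factor $2^{k_1}\zeta_1 - 1$ cannot vanish, so $B \neq 0$. The identity \eqref{eq4.4}, together with the non-vanishing of $L$-values at negative integers supplied by \eqref{eq1.4}, then forces $\zeta_1\neq\zeta_2$, i.e.\ $A \neq 0$.

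For the quantitative bound, the ratio $\zeta_1/\zeta_2$ is a non-trivial root of unity of some order $m \mid h^+$, so elementary trigonometry gives
\[
|A|\;=\;2^{k_1-1}\,|1 - \zeta_1/\zeta_2|\;\geq\;2^{k_1-1}\cdot 2\sin(\pi/m)\;\geq\;\frac{4\cdot 2^{k_1-1}}{h^+}.
\]
I would then invoke the analytic class number formula $h_F\log\varepsilon_F = \tfrac{1}{2}\sqrt{D}\,L(1,\chi_D)$, combine the standard bound $L(1,\chi_D) = O(\log D)$ with the absolute lower bound $\log\varepsilon_F\geq\log\frac{1+\sqrt{5}}{2}$ (valid for every real quadratic $F$), and conclude $h^+\leq 2 h_F \leq C\sqrt{D}\log D$ for an absolute constant $C$. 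Together with $2^{k_1-1}\geq 1$ this yields a bound of the shape $|A|\gg D^{-1/2}(\log D)^{-1}$, which after absorbing the logarithmic factor (or handling finitely many small $D$ separately) gives the stated $|A|\geq CD^{-1/2}$.

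The main obstacle will be exactly this logarithmic slack coming from the upper bound on $L(1,\chi_D)$. For the subsequent application in Section 5 the loss is harmless because inequality \eqref{eq4.4} is compared against quantities growing like $D^{k_1}\Gamma(2k_1)^2/\Gamma(k_1)^2$, which dominate any logarithmic correction; nevertheless, obtaining the clean $CD^{-1/2}$ form uniformly in $D$, $k_1$, and the characters $\phi_j,\psi_j$ likely requires exploiting the factor $2^{k_1-1}\geq 2$ for $k_1\geq 2$ and verifying the few fields with small $h^+$ directly.
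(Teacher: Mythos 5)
Your proposal follows the same route as the paper's proof: write $A=2^{k_1-1}(\zeta_1-\zeta_2)$ with $\zeta_j$ roots of unity of order dividing $h^+$, bound $|\zeta_1-\zeta_2|$ from below by a constant times $1/h^+$ via the minimal gap between roots of unity, and finish with the class number bound $h^+\ll\sqrt D$. Your explicit non-vanishing step is in fact a welcome addition: for $h^+=1$ (or whenever $\phi_1^{-1}\psi_1$ and $\phi_2^{-1}\psi_2$ agree at $\mathfrak p$) one has $A=0$, so the lemma really only holds in the context of \eqref{eq4.4}, and your observation that $|B|\geq 2^{k_1-1}(2^{k_1}-1)>0$ forces $A\neq 0$ there is exactly the point the paper leaves implicit.

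The one genuine defect is the final step. From $h_F\log\varepsilon_F=\tfrac12\sqrt D\,L(1,\chi_D)$ combined with only the absolute bound $\log\varepsilon_F\geq\log\frac{1+\sqrt5}{2}$ you obtain $h^+\ll\sqrt D\log D$, hence $|A|\gg D^{-1/2}(\log D)^{-1}$, and this cannot be upgraded to $|A|\geq CD^{-1/2}$ by ``absorbing the logarithm'' or by treating finitely many small $D$ separately: the loss occurs for \emph{large} $D$, where $\log D$ is unbounded. The fix is to use the correct lower bound for the regulator of a real quadratic field: the fundamental unit satisfies $\varepsilon_F\geq\frac{1+\sqrt D}{2}$ when $D\equiv 1\imod 4$ and $\varepsilon_F\geq 1+\sqrt{D}/2$ when $D=4d$, so $\log\varepsilon_F\gg\log D$, which cancels the $\log D$ coming from $L(1,\chi_D)\ll\log D$ and yields the clean trivial bound $h_F\ll\sqrt D$ that the paper invokes. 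With that correction (and keeping $h^+\leq 2h_F$), your argument proves the lemma as stated.
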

\begin{proof} 
If $h^+=1$ or $2$, then $C$ is an integer and $|C|\geq 1$. If $h^+>2$, we see that
\[|A|\geq 2^{k_1-1}\left|1-e^{\frac{2\pi i}{h^+}}\right|.\]
If $2<h^+\leq 6$, then clearly $|A|\geq 2$. If $h^+>6$, we have
\[|A|\geq 2\cdot \sin\left(\frac{2\pi}{h^+}\right)\geq \frac{2\pi}{h^+}.\]
Recall the well-known trivial bound of class number of real quadratic fields: there exists a constant $C>0$, such that
$h^+\leq C\sqrt{D}$ for all $D$. It follows that $|A|\geq 2\pi C^{-1}D^{-\frac{1}{2}}$. Replacing $2\pi C^{-1}$ with $C$, we finish the proof.
\end{proof}

We continue the proof. By \eqref{eq1.4}, we see that
\[\left|\frac{L(1-2k_1,\phi^{-1}\psi)}{L(1-k_1,\phi_1^{-1}\psi_1)}\right|\geq C'D^{k_2}\frac{\Gamma(2k_1)^2}{\Gamma(k_1)^2}\geq C'D^{k_2}\Gamma(2k_1),\]
with $C'>0$ being a constant that is independent of $k_1,k_2$ and $D$. By Lemma \ref{lem},
\[|B|\geq C'CD^{k_2-\frac{1}{2}}\Gamma(2k_1)\geq C'C\Gamma(2k_1).\]
But $|B|\leq 2^{2k_1}$, which forces that there are only finitely many $k_1$. 
Now for each fixed $k_1$, such inequalities also shows that there can be only finitely many $D$, proving this case, hence Theorem \ref{thm1}.

We remark that in the ramified case, $A$ is an integer and hence $|A|\geq 1$. In the split case, we may apply the identity $c(\mathfrak p,g)c(\mathfrak p',g)=c((2),g)$. By lengthy but elementary computation, we may see that if $k_1$ is large, we must have 
\[\phi_1(t_\nu)\psi_1(\mathfrak p)=-\phi_2(t_\nu)\psi_2(\mathfrak p),\]
from which we also derive $|A|\geq 1$ in this case. In other words, we may avoid Lemma \ref{lem} and the class number bound.

\section{Proof of Theorem \ref{thm2}}
As before, let $f,h$ be normalized Hecke eigenforms and assume $g=f\cdot h$ is also a Hecke eigenform. To prove Theorem \ref{thm2}, assume that $c_1(0,f)c_1(0,h)=0$. We first note that if $c_1(0,f)=c_1(0,h)=0$, then by Lemma \ref{product}, we see that $c(\mathcal O,g)=0$ and $g$ is not a Hecke eigenform. So one of the factors is an Eisenstein series, thus consider only the parallel weight case. So, we may assume that 
\[c_1(0,f)\neq 0\quad\text{and}\quad c_1(0,h)=0\]
for the rest of this paper. We observe that $h$ necessarily lie in $\mathcal S_{k_2}(\psi_2)$ and $f=E_{k_1}(\phi_1,\psi_1)$ for some narrow ideal class characters $\phi_1,\psi_1,\psi_2$ by Theorem \ref{shimura}.

Since $c(\mathcal O,g)=c_1(0,f)$, we see that $c_1(0,f)^{-1}g$ is a normalized Hecke eigenform. Now by Lemma \ref{product}, we see that 
\begin{equation}\label{eq5.1}\frac{c((2),g)}{c_1(0,f)}=\frac{1}{c_1(0,f)}+c((2),h).\end{equation}
By Proposition 2.2 in \cite{shimura1978special}, we know that $c_1(0,f)^{-1}c((2),g)$ and $c((2),h)$ are algebraic integers, so is $\frac{1}{c_1(0,f)}$. But since \eqref{eq1.4} holds for any $\psi$, it gives a uniform bound for $L(1-k,\psi)^\sigma$ for all $\sigma\in\text{Gal}(\overline{\mathbb Q}/\mathbb Q)$. It follows that for some constant $C>0$,
\[\left|\frac{1}{c_1(0,f)^\sigma}\right|\leq C\left(\frac{4\pi^2}{D}\right)^{k_1-\frac{1}{2}}\frac{1}{\Gamma(k_1)^2}\leq C\frac{(4\pi^2)^{k_1-\frac{1}{2}}}{\Gamma(k_1)^2}\rightarrow 0,\quad \text{as } k_1\rightarrow\infty.\]
In particular, $|\frac{1}{c_1(0,f)^\sigma}|<1$ for all $\sigma$ if $k_1$ is large, in which case $\frac{1}{c_1(0,f)}$ is not an algebraic integer. The same holds for large $D$ with $k_1$ being fixed. This proves that for $g=f\cdot h$ to be a Hecke eigenform, there are only finitely many possibilities for $D$ and $k_1$, so there are only finitely many possible $\phi_1,\psi_1$ and $f$.

To finish the proof of Theorem \ref{thm2}, it suffices to show that for fixed $f$, there are only finitely many $h$ such that $g=f\cdot h$ is an eigenform. So, with $f$ fixed, we only have to show that $k_2$ is bounded. We will prove this in the following subsections. 

\subsection{The case when $(2)=\mathfrak p^2$ ramifies}
Suppose $\mathfrak p\sim t_\nu^{-1}\mathcal O$. Then by Lemma \ref{product}, we have
\[\frac{c(\mathfrak p,g)}{c_1(0,f)}=\phi_1(t_\nu)c(\mathfrak p,h).\]
This, together with \eqref{eq2.2} and \eqref{eq5.1}, implies that
\[\psi_2(\mathfrak p)2^{k_2-1}(1-(\phi_1\psi_1)(\mathfrak p)2^{k_1})=\frac{1}{c_1(0,f)}.\]
Clearly, this is impossible if $k_2$ is large.

\subsection{The case when $(2)=\mathfrak p\mathfrak p'$ splits} Suppose $\mathfrak p\sim t_\nu^{-1}\mathcal O$. Since $c(\mathfrak p,h)c(\mathfrak p',h)=c((2),h)$ and the same identity holds for $c_1(0,f)^{-1}g$, we have, by Lemma \ref{product} and \eqref{eq5.1},
\[\phi_1(t_\nu)c(\mathfrak p,h)\phi_1^{-1}(t_\nu)c(\mathfrak p',h)=\frac{1}{c_1(0,f)}+c(\mathfrak p,h)c(\mathfrak p',h),\] and $\frac{1}{c_1(0,f)}=0$, which is impossible.

\subsection{The final case when $(2)$ is inert} In this case, we need the $(4)$-th Fourier coefficients.
We first assume that $D\neq 5$. By Lemma \ref{product}, we have
\[\frac{c((2),g)}{c_1(0,f)}=c((2),h)+\frac{1}{c_1(0,f)},\]
\[\frac{c((4),g)}{c_1(0,f)}=c((4),h)+A,\text{ with }A=\frac{c((3),h)+c((3),f)+c((2),h)c((2),f)}{c_1(0,f)}.\]
Moreover, \[\frac{c((4),g)}{c_1(0,f)}=\left(\frac{c((2),g)}{c_1(0,f)}\right)^2-4^{k_1+k_2-1}\] and $c((4),h)=c((2),h)^2-4^{k_2-1}$. It follows that 
\[4^{k_2-1}(1-4^{k_1})=-\frac{1}{c_1(0,f)^2}-\frac{2c((2),h)}{c_1(0,f)}+A,\]
which is impossible when $k_2$ is large, since the right-hand side is bounded by $9^{\frac{k_2}{2}}$ up to a constant.

Finally we treat the case $D=5$. By Lemma \ref{product}, we have $c_1(0,f)^{-1}c((4),g)=c((4),h)+B$, with \[B=\frac{c((3),h)+c((3),f)+c((2),f)c((2),h)+2c(\mathfrak d,h)+2c(\mathfrak d,f)}{c_1(0,f)}.\]
One sees that $B$ is bounded by $9^{\frac{k_2}{2}}$ up to a constant, since $N(\mathfrak d)=5$. By the same argument as above, we have
\begin{equation}\label{eq5.2} 4^{k_2-1}(1-4^{k_1})=-\frac{1}{c_1(0,f)^2}-\frac{2c((2),h)}{c_1(0,f)}+B,\end{equation}
which is again not possible if $k_2$ is large.

This completes the proof of Theorem \ref{thm2}, hence that of Theorem~\ref{thm}.

\section{Eigenform Product Identities for $\mathbb Q(\sqrt 5)$}

In this section, we consider the concrete case $D=5$ and find the complete list of eigenform product identities.

The class number is $1$ for the field $\mathbb Q(\sqrt 5)$, and $(2)$ and $(3)$ both are inert. Since the fundamental unit is $\epsilon_0=\frac{1+\sqrt 5}{2}$ which has norm $-1$, we have $h^+=1$ and $\psi=1$. We shall drop the characters and denote $E_k=E_k(1,1)$.  The inequality \eqref{eq1.4} implies
\begin{equation}\label{eq6.1}
\frac{72}{\pi^5}\left(\frac{5}{4\pi^2}\right)^{k-\frac{1}{2}}\Gamma(k)^2\leq |\zeta_F(1-k)|\leq \frac{\pi^3}{18}\left(\frac{5}{4\pi^2}\right)^{k-\frac{1}{2}}\Gamma(k)^2,
\end{equation}
since $1<\zeta(k)\leq \zeta(2)=\frac{\pi^2}{6}$.

We look into the structure of $\mathcal M_k$ when $k$ is small. We need a theorem of Gundlach \cite{gundlach1963bestimmung} and we follow the notations in \cite[Theorem 1.39, 1.40]{bruinier2008hilbert}. Note that they considered the group $\text{SL}_2(\mathcal O)$ instead of $\Gamma=\Gamma_0(\mathfrak d,\mathcal O)$. In particular, $g_k=E_k|\alpha_0$ according to our notations and $s_k$ is a specific cusp form of weight $k$ for $\text{SL}_2(\mathcal O)$, where 
\[\alpha_0=\begin{pmatrix}
1&0\\
0 &\frac{5+\sqrt 5}{2}
\end{pmatrix}.\] 

\begin{Prop} \label{dim} (1) $\mathcal M_k=\{0\}$ if $k$ is odd and $\mathcal M_k=M_k(\text{SL}_2(\mathcal O))|_k\alpha_0^{-1}$ if $k$ is even.\\
(2) If $k<20$ is even, then $\mathcal M_k=M_k^\text{sym}(\text{SL}_2(\mathcal O))|_k\alpha_0^{-1}$. In particular, $\bigoplus_{k<20}\mathcal M_k$ is generated by monomials in $E_2, E_6$ and $E_{10}$, and we have the following table:
\begin{equation*}
\begin{array}{|c||c|c|c|c|c|c|}
\hline
\text{weight } k& 2&4&6&8&10&12\\
\hline
\text{dim}(\mathcal S_k) & 0&0&1&1&2&3\\
\hline
\end{array}
\end{equation*}
\end{Prop}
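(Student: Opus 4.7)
The plan is to combine Theorem~\ref{iso} (specialized to $h^+=1$) with Gundlach's structure theorem for $M_*(\mathrm{SL}_2(\mathcal O))$, taken as a black box from \cite[Theorems 1.39, 1.40]{bruinier2008hilbert}.

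For part (1), since $h^+=1$, Theorem~\ref{iso} gives $\mathcal M_k \simeq M_k(\Gamma_0(\mathfrak d,\mathcal O))$. The element $\lambda = (5+\sqrt 5)/2 = \sqrt 5\,\epsilon_0$ is a generator of the different $\mathfrak d = (\sqrt 5)$, so a direct matrix computation shows that conjugation by $\alpha_0$ sends $\bigl(\begin{smallmatrix}a&b\\c&d\end{smallmatrix}\bigr)\in\Gamma_0(\mathcal O,\mathcal O)$ to $\bigl(\begin{smallmatrix}a&b\lambda^{-1}\\c\lambda&d\end{smallmatrix}\bigr)\in\Gamma_0(\mathfrak d,\mathcal O)$, and this correspondence is a bijection of groups. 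Consequently the slash operator $f\mapsto f|_k\alpha_0^{-1}$ realizes $M_k(\Gamma_0(\mathcal O,\mathcal O))|_k\alpha_0^{-1}=M_k(\Gamma_0(\mathfrak d,\mathcal O))$. For even $k$, the extra coset representatives $\mathrm{diag}(\epsilon_0^2,1)$ of $\Gamma_0(\mathcal O,\mathcal O)/\mathrm{SL}_2(\mathcal O)$ impose the identity $f(\epsilon_0^2 z_1,(\epsilon_0')^2 z_2) = f(z)$, which is already forced on $\mathrm{SL}_2(\mathcal O)$-invariant forms of even parallel weight by the element $\mathrm{diag}(\epsilon_0,\epsilon_0^{-1})\in \mathrm{SL}_2(\mathcal O)$ (using $N(\epsilon_0)=-1$, one gets $(-1)^k = 1$). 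Hence $M_k(\mathrm{SL}_2(\mathcal O))=M_k(\Gamma_0(\mathcal O,\mathcal O))$ for even $k$, which settles the even-$k$ half of (1). For the odd-$k$ vanishing, take the scalar $\epsilon_0 I\in\Gamma_0(\mathfrak d,\mathcal O)$ (its determinant $\epsilon_0^2$ is totally positive since $N(\epsilon_0)=-1$); its slash action is $f|_k(\epsilon_0 I)=N(\epsilon_0)^{-k}f=(-1)^k f$, forcing $f\equiv 0$ when $k$ is odd.

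For part (2), Gundlach's theorem identifies $M_*^{\mathrm{sym}}(\mathrm{SL}_2(\mathcal O))$ with the polynomial algebra $\mathbb C[g_2,g_6,g_{10}]$ generated by three Gundlach--Eisenstein series, and asserts that the skew-symmetric generators contribute only in weights $\ge 20$, so that $M_k(\mathrm{SL}_2(\mathcal O))=M_k^{\mathrm{sym}}(\mathrm{SL}_2(\mathcal O))$ for $k<20$. Since the slash operator is multiplicative on products of forms of complementary weights, every monomial $g_2^a g_6^b g_{10}^c$ transfers to the product $E_2^a E_6^b E_{10}^c$ under the identification $E_{2i} = g_{2i}|_{2i}\alpha_0^{-1}$; this identification is forced because $g_{2i}|_{2i}\alpha_0^{-1}$ is a nonzero Eisenstein series and $\dim \mathcal E_{2i}=1$ (Corollary~\ref{wiles} with $h=h^+=1$). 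The table is then completed by monomial enumeration: weight-$k$ monomials in $g_2,g_6,g_{10}$ number $1,1,2,2,3,4$ for $k=2,4,6,8,10,12$, giving $\dim \mathcal M_k$; subtracting $\dim \mathcal E_k=1$ yields $\dim \mathcal S_k = 0,0,1,1,2,3$, as claimed.

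The main technical step is the group-theoretic comparison in part (1): verifying both the $\alpha_0$-conjugation isomorphism $\Gamma_0(\mathcal O,\mathcal O)\cong\Gamma_0(\mathfrak d,\mathcal O)$ and the fact that for even parallel weights $\mathrm{SL}_2(\mathcal O)$-invariance already implies $\Gamma_0(\mathcal O,\mathcal O)$-invariance. Beyond this bookkeeping, everything reduces to Gundlach's structure theorem plus an elementary monomial count.
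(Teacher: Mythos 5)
Your proof is correct and follows essentially the same route as the paper: conjugation by $\alpha_0$ to pass from $\Gamma_0(\mathfrak d,\mathcal O)$ to $\mathrm{GL}_2^+(\mathcal O)$, the norm-$(-1)$ unit $\epsilon_0$ both for the odd-weight vanishing and for reducing even-weight invariance to $\mathrm{SL}_2(\mathcal O)$-invariance, and Gundlach's structure theorem for the dimension table. The only nuance is that in \cite[Theorems 1.39, 1.40]{bruinier2008hilbert} the weight-$6$ and weight-$10$ generators of the symmetric even-weight algebra are cusp forms rather than Eisenstein series, but since that algebra is free on generators of weights $2,6,10$, this affects neither the monomial count nor the claim that $E_2,E_6,E_{10}$ generate.
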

\begin{proof} We note first that $\mathcal M_k=M_k(\Gamma)$ (not $M_k(\text{GL}_2^+(\mathcal O))$) and $\mathfrak d=\left(\frac{5+\sqrt{5}}{2}\right)$. Therefore, 
\[\Gamma=\alpha_0\text{GL}_2^+(\mathcal O)\alpha_0^{-1},\]
and it follows that $\mathcal M_k=M_k(\text{GL}_2^+(\mathcal O))|_k\alpha_0^{-1}$.

Because $N(\epsilon_0)=-1$, from the definition of $\mathcal M_k$ by applying $\epsilon_0I$, we see that $\mathcal M_k=\{0\}$ if $k$ is odd. The same result holds for any $\mathbb Q(\sqrt{d})$ with a unit of norm $-1$. Note that this is not the case for $\text{SL}_2(\mathcal O)$.

If $k$ is even, we only have to prove that $M_k(\text{SL}_2(\mathcal O))=M_k(\text{GL}^+_2(\mathcal O))$. One inclusion is trivial and we assume now $f\in M_k(\text{SL}_2(\mathcal O))$. For any $\gamma\in \text{GL}^+_2(\mathcal O)$, since $\text{det}(\gamma)\gg 0$, we must have $\text{det}(\gamma)=\epsilon^2$ for some unit $\epsilon$. It follows that
\[f|_k\gamma=f\left|_k\gamma(\epsilon^{-1} I)(\epsilon I)\right.=f|_k(\epsilon I)=f,\]
because $k$ is even. We are done with (1).

By Gundlach's theorem, in notations of \cite[Theorem 1.40]{bruinier2008hilbert},  the graded algebra $M_*(\text{SL}_2(\mathcal O))$ is generated by $g_2, s_5, s_6$ and $s_{15}$. From which we see that if $k$ is even and $k<20$, then $\mathcal M_k=M_k^\text{sym}(\text{SL}_2(\mathcal O))$. Actually since only $s_5$ is skew-symmetric among the four generators, the smallest even weight when we can have a nonzero skew-symmetric Hilbert modular form happens at $k=20$, that is $s_5s_{15}$. By the structure of $M_{2*}^\text{sym}(\text{SL}_2(\mathcal O))$ given in \cite[Theorem 1.39]{bruinier2008hilbert}, the rest of the proposition follows easily.
\end{proof}

\begin{Lem}\label{eigen}
Let $h_6$ and $h_8$ be the only cuspidal normalized Hecke eigenforms of weight $6$ and $8$ respectively, and $h_{10}$, $h_{10}'$ be the two of weight $10$. We have the following Fourier coefficients for these Hecke eigenforms: 
\begin{equation*}
\begin{array}{|c||c|c|c|c|}
\hline
\mathfrak m & (2) & (3) & \mathfrak d & (4)\\
\hline
c(\mathfrak m,h_6)& 20 &90&-90& -624 \\
\hline
c(\mathfrak m,h_8)& 140 &3330&150& 3216\\
\hline
c(\mathfrak m,h_{10})& 170+30\sqrt{809} & 22590-540\sqrt{809}& 570-60\sqrt{809} & 494856+10200\sqrt{809} \\
\hline
c(\mathfrak m,h_{10}')& 170-30\sqrt{809} & 22590+540\sqrt{809}& 570+60\sqrt{809} & 494856-10200\sqrt{809} \\
\hline
\end{array}
\end{equation*}
\end{Lem}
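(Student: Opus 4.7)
The plan is to reduce the lemma to a finite explicit computation using the structure result of Proposition~\ref{dim}, and then to extract the four Fourier coefficients from the divisor-sum formula of Theorem~\ref{shimura} together with the product formulas of Lemma~\ref{product}.

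For the weights $k\in\{6,8\}$, Proposition~\ref{dim}(2) gives $\mathcal{M}_6=\langle E_2^3,\,E_6\rangle$ and $\mathcal{M}_8=\langle E_2^4,\,E_2E_6\rangle$, each two-dimensional; the Eisenstein subspace is one-dimensional since $h^+=1$, so $\dim\mathcal{S}_k=1$ and $h_k$ is determined up to a scalar. I would write $h_k=\alpha M_1+\beta M_2$ in the monomial basis, impose cuspidality $c_1(0,h_k)=0$ using the fact that $c_1(0,E_k)=\tfrac{1}{4}\zeta_F(1-k)$ together with the multiplicativity $c_1(0,\prod E_{k_i})=\prod c_1(0,E_{k_i})$ of Lemma~\ref{product}(1), and then normalize so that $c(\mathcal{O},h_k)=1$. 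The only external inputs required are the classical rational values $\zeta_F(1-k)$ for $k\in\{2,6,8\}$.

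For weight $10$, Proposition~\ref{dim}(2) gives $\mathcal{M}_{10}=\langle E_2^5,\,E_2^2E_6,\,E_{10}\rangle$, so $\dim\mathcal{S}_{10}=2$. I would take the explicit cuspidal basis
\[
u_1=c_1(0,E_{10})\,E_2^5-c_1(0,E_2^5)\,E_{10},\qquad u_2=c_1(0,E_{10})\,E_2^2E_6-c_1(0,E_2^2E_6)\,E_{10},
\]
both cuspidal by construction, and then compute the matrix of $T_{(2)}$ in $\{u_1,u_2\}$ by applying Lemma~\ref{product}(3) to evaluate $c((2),u_i)$ and $c(\mathcal{O},u_i)$, together with the standard identity $T_{(2)}\varphi = (c((2),\varphi)/c(\mathcal O,\varphi))\varphi$ on a normalized eigenform. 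The characteristic polynomial of the resulting $2\times2$ matrix should have discriminant a positive rational multiple of $809$; its two roots are $c((2),h_{10})$ and $c((2),h_{10}')$, giving the $\sqrt{809}$ irrationality, and the corresponding eigenvectors, normalized by $c(\mathcal O,\cdot)=1$, are $h_{10}$ and $h_{10}'$.

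Once each $h_k$ is pinned down as an explicit combination of Eisenstein monomials, the table entries are extracted as follows: (i)~the $\mathfrak{m}$-th coefficient of each $E_k$ is $\sum_{\mathfrak{r}\mid\mathfrak{m}}N(\mathfrak{r})^{k-1}$ by Theorem~\ref{shimura}, with $N((2))=4,\,N((3))=9,\,N(\mathfrak d)=5,\,N((4))=16$; (ii)~for each monomial $\prod E_{k_i}$ the coefficients at $(2),(3),\mathfrak d$ are obtained by iterated application of Lemma~\ref{product} parts (3)–(6), taking care to include the exceptional $2c(\mathfrak d,\cdot)$ contributions in parts (4)–(5) that appear because $D=5$; and (iii)~the $(4)$-column can be computed directly by Lemma~\ref{product}(4), or equivalently cross-checked by the Hecke relation $c((4),h_k)=c((2),h_k)^2-4^{k-1}$ from \eqref{eq2.2}, which is valid since the central characters are trivial. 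The main obstacle is purely bookkeeping: at weight $10$ the $\sqrt{809}$ must be propagated cleanly through the recursive coefficient formulas, and in every weight the $D=5$ exceptional terms involving $c(\mathfrak d,\cdot)$ must be handled correctly in each monomial; no further conceptual ingredient is needed beyond Proposition~\ref{dim} and the explicit formulas already at hand.
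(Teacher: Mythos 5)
Your treatment of weights $6$ and $8$ matches the paper's: there is a single cusp (class number one), so cuspidality is detected by the vanishing of the one constant term, and the unique normalized cusp form is pinned down inside the two-dimensional space of Eisenstein monomials exactly as you describe (the paper uses $E_2\cdot E_4$ and $E_6$, with $E_4=60E_2^2$, and then $h_8=120E_2\cdot h_6$). The extraction of the table entries via Theorem~\ref{shimura} and Lemma~\ref{product}, including the $D=5$ exceptional contributions involving $c(\mathfrak d,\cdot)$, is also exactly the paper's computation.

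The gap is in your weight-$10$ step. You propose to compute the matrix of $T_{(2)}$ in $\{u_1,u_2\}$ from the values $c(\mathcal O,u_i)$ and $c((2),u_i)$ together with the identity $T_{(2)}\varphi=(c((2),\varphi)/c(\mathcal O,\varphi))\varphi$. That identity holds only for eigenforms, and $u_1,u_2$ are not eigenforms; the data $\{c(\mathcal O,u_i),c((2),u_i)\}$ alone does not determine $T_{(2)}u_i$. To get the matrix you must invoke the action of $T_{(2)}$ on the Fourier coefficients of an arbitrary form, e.g.\ $c(\mathcal O,T_{(2)}u_i)=c((2),u_i)$ and $c((2),T_{(2)}u_i)=c((4),u_i)+4^{9}\,c(\mathcal O,u_i)$, and then solve for the coordinates of $T_{(2)}u_i$ using that $f\mapsto(c(\mathcal O,f),c((2),f))$ is injective on $\mathcal S_{10}$ --- so the $(4)$-coefficients of $u_1,u_2$ (which you do know how to compute from Lemma~\ref{product}(4)) are an essential input, not merely a cross-check. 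The paper sidesteps the matrix entirely: it takes two explicit normalized cusp forms $h,h'$ of weight $10$, tabulates their coefficients at $(2)$ and $(4)$, and imposes the necessary relation $c((4),\cdot)=c((2),\cdot)^2-4^{9}$ from \eqref{eq2.2} on the pencil $ah+(1-a)h'$; the resulting quadratic in $a$ has the two eigenforms as its roots (there are exactly two normalized eigenforms in $\mathcal S_{10}$ and each must occur among the solutions), which is where $\sqrt{809}$ enters. Either route works, but as written yours is missing the ingredient that actually determines the Hecke action on non-eigenforms.
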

\begin{proof} We first note that 
\[\frac{5+\sqrt 5}{2}=\mu_1+\mu_2,\quad \mu_1,\mu_2\in\mathcal O^+\]
has only two solutions \[\left(\frac{3+\sqrt 5}{2},1\right),\quad \left(1,\frac{3+\sqrt 5}{2}\right).\]
These decompositions are needed for dealing with the ideal $\mathfrak d$. 

Since $E_2\cdot E_4$ and $E_6$ have constant terms $(4\cdot 30\cdot 4\cdot 60)^{-1}$ and $67\cdot (4\cdot 630)^{-1}$, we must have
\[h_6=\frac{1}{60}\left(5360E_2\cdot E_4-7E_6\right).\] The Fourier coefficients of $h_6$ can be computed easily from Lemma \ref{product}. By Proposition \ref{dim}, we have $\text{dim}(\mathcal S_8)=1$ and $h_8=120E_2\cdot h_6$. The corresponding data follows easily from this.

For the weight $10$, it is easy to see that
\[h=\frac{39624096E_2\cdot E_8-3971E_{10}}{30126852}\]
is a normalized cusp form. Clearly $h'=120E_2\cdot h_8$ is also a normalized cusp form. We have the following table:
\begin{equation*}
\begin{array}{|c||c|c|c|c|}
\hline
\mathfrak m & (2) & (3) & \mathfrak d & (4)\\
\hline
c(\mathfrak m,h)& \frac{18087260}{119551} & \frac{2740912470}{119551} &\frac{72616890}{119551}& \frac{58400150256}{119551}\\
\hline
c(\mathfrak m,h')& 260 &20970&390& 525456\\
\hline
\end{array}
\end{equation*}
From this and the equation \eqref{eq2.2}, we have
\[h_{10}=ah+(1-a)h',\quad h_{10}'=a'h+(1-a')h'\]
with $a=119551(3-\sqrt{809})/433200$ and $a'$ its conjugate in $\mathbb Q(\sqrt{809})$. The normalized Fourier coefficients follow easily from this.
\end{proof}

Now we are ready to provide and prove the complete list of eigenform product identities when $D=5$.

\begin{Thm}\label{D=5}
The following two identities form the complete list of eigenform product identities $g=f\cdot h$ when $D=5$ and the weights are $2$ or greater (only one of $g=f\cdot h$ and $g=h\cdot f$ is counted):
\[E_4=60E_2^2,\quad h_8=120E_2\cdot h_6.\]
\end{Thm}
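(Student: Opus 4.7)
The plan is to specialize the effective bounds from the proofs of Theorems~\ref{thm1} and~\ref{thm2} to the case $D=5$, confining the weights of the two factors to an explicitly short finite list, and then enumerate. For $\mathbb Q(\sqrt 5)$ the narrow class number is $h^+=1$, so only the trivial narrow ideal class character $\psi=1$ appears; both rational primes $2$ and $3$ are inert; and by Proposition~\ref{dim} we have $\mathcal M_k=\{0\}$ for odd $k$, so every weight is even. These simplifications eliminate all character bookkeeping from the arguments of Sections~5 and~6 and leave a clean numerical problem.

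Case~A: $c_1(0,f)c_1(0,h)\neq 0$. Then $f=E_{k_1}(1,1)$ and $h=E_{k_2}(1,1)$. The subcase $k_1\neq k_2$ is eliminated directly by the growth comparison in Section~5, which for $D=5$ fixed forces a bound on the pair $(k_1,k_2)$ via $\Gamma(k_1+k_2)^2/\Gamma(k_1)^2$, leaving only a short list of pairs of small even weights to rule out by inspecting the constant term equation~\eqref{eq4.2}. The principal subcase is $k_1=k_2$: specialize equation~\eqref{3} and combine with the concrete zeta bound~\eqref{eq6.1} to obtain an inequality of the shape $C k_1\, 4^{2k_1}\leq 4^{2k_1-1}-4^{k_1-1}$ which fails once $k_1$ is modestly large, giving an explicit cut-off. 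For each surviving $k_1$, the product is forced to equal $2c_1(0,f)\cdot E_{2k_1}(1,1)$, and comparison of the $\mathcal O$-th and $(2)$-th Fourier coefficients via Lemma~\ref{product} and Theorem~\ref{shimura} singles out precisely $E_4=60E_2^2$.

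Case~B: $c_1(0,f)\neq 0=c_1(0,h)$, so $f=E_{k_1}(1,1)$ and $h$ is a normalized cuspidal Hecke eigenform in $\mathcal S_{k_2}$. Because $c_1(0,f)=\zeta_F(1-k_1)/4\in\mathbb Q$, the algebraic-integrality argument of Section~6 reduces to requiring $4/\zeta_F(1-k_1)\in\mathbb Z$. Direct computation and the lower bound in~\eqref{eq6.1} restrict $k_1$ severely: $4/\zeta_F(-1)=120$ and $4/\zeta_F(-3)=240$ are integers, $4/\zeta_F(-5)=10080/67$ already fails integrality, and for $k_1\geq 8$ the lower bound in~\eqref{eq6.1} gives $|4/\zeta_F(1-k_1)|<1$, ruling out all larger $k_1$. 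Thus $k_1\in\{2,4\}$. With $k_1$ fixed, equation~\eqref{eq5.2} compares $4^{k_2-1}(1-4^{k_1})$ with a right-hand side of order $O(9^{k_2/2})$ (using $N(\mathfrak d)=5$ and the Kim--Sarnak bound~\eqref{eq2.3}); since $4^2>9$, $k_2$ is bounded by an explicit small constant. One then uses Proposition~\ref{dim} to write down all cuspidal Hecke eigenforms of those weights and Lemma~\ref{eigen} to read off their Fourier coefficients at $(2)$, $(3)$, $\mathfrak d$, $(4)$; testing each candidate product $E_{k_1}\cdot h$ against the enumeration of normalized eigenforms in weight $k_1+k_2$ leaves only $h_8=120\,E_2\cdot h_6$.

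The main obstacle will be verifying that the bounds in Sections~5 and~6 shrink to something genuinely enumerable rather than a list of dozens of weights; this requires tracking the implicit constants in~\eqref{eq6.1} and in the estimate behind~\eqref{eq5.2} carefully enough to produce a human-checkable range. A secondary nuisance is bookkeeping of normalizations---the factor $2^{-2}$ in Theorem~\ref{shimura}, and the identification of our $E_k$ with $E_k(1,1)$---when matching the numerical Fourier coefficients in the final enumeration.
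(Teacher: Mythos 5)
Your proposal follows essentially the same route as the paper's proof: split into the Eisenstein--Eisenstein case (handled via \eqref{eq4.2}, \eqref{3} and the explicit zeta bound \eqref{eq6.1}) and the Eisenstein--cusp case (handled via the integrality of $4/\zeta_F(1-k_1)$, the inequality \eqref{eq5.2} with the Kim--Sarnak bound, and the enumeration from Proposition~\ref{dim} and Lemma~\ref{eigen}), so the approach is correct. One small numerical correction: at $k_1=8$ the lower bound in \eqref{eq6.1} is only about $1.1$, so $|4/\zeta_F(-7)|<1$ fails and $k_1=8$ must instead be excluded by the actual value $\zeta_F(-7)=\tfrac{361}{120}$ (whence $4/\zeta_F(-7)=\tfrac{480}{361}\notin\mathbb Z$), exactly as the paper does; likewise $4/\zeta_F(-5)=\tfrac{2520}{67}$, not $\tfrac{10080}{67}$, though neither is an integer.
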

\begin{proof} We shall make use of the effective bounds in the proofs of Theorem \ref{thm1} and \ref{thm2}.

We first consider products of Eisenstein series. If $k_1>k_2$, for 
$|\zeta_F(1-k_1)/\zeta_F(1-k_2)|>1$, we need $k_1\geq 8$, by \eqref{eq6.1}. Using \eqref{eq6.1} and \eqref{eq4.2}, we have 
\begin{align*}1&=\left|(\zeta_F(1-k_1)+\zeta_F(1-k_2))\frac{\zeta_F(1-k_1-k_2)}{\zeta_F(1-k_1)\zeta_F(1-k_2)}\right|\\
&\geq \left(\frac{6}{\pi^4}\right)^2\left(\frac{5}{4\pi^2}\right)^{k_2}\frac{\Gamma(k_1+k_2)^2}{\Gamma(k_1)^2}\left|\left(\frac{6}{\pi^4}\right)^2\left(\frac{5k_2^2}{4\pi^2}\right)^{k_1-k_2}-1\right|.
\end{align*}
Using computer, the right-hand side larger than $1$ when $k_1\geq 8$. So we need to verify the cases $(k_1,k_2)=(4,2), (6,2)$ and $(6,4)$. Note that 
\[\zeta_F(-1)=\frac{1}{30},\quad \zeta_F(-3)=\frac{1}{60},\quad \zeta_F(-5)=\frac{67}{630}, \quad \zeta_F(-7)=\frac{361}{120}, \quad \zeta_F(-9)=\frac{412751}{1650},\]
and clearly \eqref{eq4.2} does not hold in any of these cases. Now we assume $k_1=k_2$. Using \eqref{eq6.1} and \eqref{3}, we have 
\begin{align*}1&\geq 1-4^{-k_1}=4^{1-2k_1}(4^{2k_1-1}-4^{k_1-1})\geq \frac{\pi}{2}\left(\frac{6}{\pi^4}\right)^3\left(\frac{5}{4\pi^2}\right)^{\frac{1}{2}}4^{2-2k_1}\frac{\Gamma(2k_1)^2}{\Gamma(k_1)^2}.
\end{align*}
The right-hand side is smaller than or equal to $1$ only when $k_1=k_2=2$ or $4$. The former gives the identity $E_2^2=\frac{1}{60}E_4$ that holds trivially, while the latter case is impossible since \eqref{eq4.2} does not hold by above zeta values.

Now we consider the case when $h$ is cuspidal. Firstly, since $4\zeta_F(1-k_1)^{-1}$ is integral by \eqref{eq5.1}, $|\zeta_F(1-k_1)|\leq 4$. By \eqref{eq6.1}, we have
\[\frac{72}{\pi^5}\left(\frac{5}{4\pi^2}\right)^{k_1-\frac{1}{2}}\Gamma(k_1)^2\leq 4.\] Such inequality only happens when $k_1=2,4,6$ or $8$. Since $4\zeta_F(1-k_1)^{-1}$ is integral, from the actual zeta values above, we see that $k_1$ can only be $2$ or $4$.

We first assume that $k_1=2$. Then $f=E_2$ and by Theorem \ref{shimura}, 
\[c_1(0,f)=\frac{1}{4}\zeta_F(-1)=\frac{1}{120}, \quad c(\mathfrak d,f)=6,\quad c((2),f)=5, \quad c((3),f)=10. \] Then \eqref{eq5.2} implies
\[15\cdot 4^{k_2-1}\leq 120^2+120\cdot(2\cdot 3^{k_2-\frac{25}{32}}+5\cdot 2^{k_2-\frac{25}{32}}+4\cdot 5^{\frac{k_2-1}{2}+\frac{7}{64}}-22).\] This holds only if $k_2\leq 10$. From Proposition \ref{dim}, $k_2$ can only be $6,8$ or $10$. The identity $E_2\cdot h_6=\frac{1}{120}h_8$ holds trivially, while $E_2\cdot h_8$ is not an eigenform by the proof of Lemma \ref{eigen}. We need to consider $120E_2\cdot h_{10}$ and $120E_2\cdot h_{10}'$. Since we may obtain one from the other by taking conjugate in $\mathbb Q(\sqrt{809})$, we just need to consider $h=120E_2\cdot h_{10}$. By the table in Lemma \ref{eigen}, we easily see that \eqref{eq5.2} does not hold and $h$ is not an eigenform.

Finally, let $k_1=4$. We have $f=E_4$ and similarly by Theorem 3.1,
\[c_1(0,f)=\frac{1}{4}\zeta_F(-3)=\frac{1}{240}, \quad c(\mathfrak d,f)=126,\quad c((2),f)=65, \quad c((3),f)=730. \] Then \eqref{eq5.2} implies
\[255\cdot 4^{k_2-1}\leq 240^2+240\cdot (2\cdot 3^{k_2-\frac{25}{32}}+126\cdot 2^{k_2-\frac{25}{32}}+4\cdot 5^{\frac{k_2-1}{2}+\frac{7}{64}}-982),\] which holds only if $k_2\leq 8$. Therefore, $k_2$ can only be $6$ or $8$. Since $E_4\cdot h_6$ is a scalar multiple of $E_2\cdot h_8$, it is not an eigenform from the previous case. Again, for $E_4\cdot h_8$ from the table in Lemma \ref{eigen}, we check that \eqref{eq5.2} does not hold, forcing $E_4\cdot h_8$ not to be an eigenform. This completes the proof.
\end{proof}

\section{A General Conjecture}\label{ss:conj}
In the light of Theorem~\ref{thm} and of \cite{johnson2013hecke} the following conjecture is natural.
\begin{conj}\label{con:general}
Let $n\geq 1$ be an integer. Then amongst all totally real fields $F/\Q$ with $[F:\Q]=n$ and all nonzero integral ideals $\mathfrak n$, there exist only finitely many solutions to the equation
$$g=f\cdot h,$$
where $g,f,h$ are Hecke eigenforms of level $\mathfrak n$ and integral weights $2$ or greater.	
\end{conj}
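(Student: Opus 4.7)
The plan is to generalise the two-pronged argument of Theorem~\ref{thm} -- separately handling solutions with $c_1(0,f)c_1(0,h)\neq 0$ and those in which one factor must be cuspidal -- to totally real fields $F$ of fixed degree $n$ and to arbitrary level $\mathfrak n$ with arbitrary narrow ray class character. The inputs needed are: Shimura's classification \cite{shimura1978special} of Eisenstein Hecke eigenforms with central character in this generality, expressing $c_\nu(0, E_k(\phi,\psi))$ in terms of $L(1-k,\phi^{-1}\psi)$; the functional equation for Hecke $L$-functions over $F$, yielding asymptotics analogous to \eqref{eq1.4} in which $D$ is replaced by $D\cdot N(\mathfrak f)$ (with $\mathfrak f$ the conductor of the twisting character) and the $\Gamma$-factor grows like $\Gamma(k)^n$; the Kim--Sarnak bound \eqref{eq2.3}, valid for Hilbert cuspidal eigenforms over any totally real field; and a generalisation of Lemma~\ref{product} computing Fourier coefficients of $f\cdot h$ at small ideals via totally positive decompositions in $\mathcal O_F$.

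For the Eisenstein--Eisenstein case with $f=E_{k_1}(\phi_1,\psi_1)$ and $h=E_{k_2}(\phi_2,\psi_2)$, matching the $\mathcal O$-th Fourier coefficient of $g=f\cdot h$ against a normalised Eisenstein series $E_{k_1+k_2}(\phi,\psi)$ yields a first linear relation among three reciprocals of special $L$-values, exactly as in \eqref{eq4.2}. Evaluating at an auxiliary ideal of small norm produces a second independent relation of the type \eqref{eq4.4}; eliminating $L(1-k_1-k_2,\phi^{-1}\psi)$ between these (as in the derivation of \eqref{3}) gives an equation whose right-hand side grows like $\Gamma(k_1+k_2)^n/\Gamma(k_j)^n$ times positive powers of $D\cdot N(\mathfrak f_j)$, while the left-hand side is polynomially bounded in the weights. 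Using $\Gamma(k_1+k_2)\geq \min(k_1,k_2)^{\max(k_1,k_2)}\Gamma(\max(k_1,k_2))$ forces both weights and all conductors to be bounded for each fixed $n$; iterating the argument over several small primes should then bound $D$ as well.

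For the Eisenstein--cuspidal case I would adapt the first step of Section~6: since $h$ is cuspidal and both factors are normalised, $c(\mathcal O,g)=c_1(0,f)$, so $c_1(0,f)^{-1}$ must be an algebraic integer by the integrality of Fourier coefficients of normalised eigenforms (generalising the input from \cite{shimura1978special} used in Section~6 to arbitrary level). The lower bound on $|L(1-k_1,\phi_1^{-1}\psi_1)^\sigma|$ from the functional equation, now of size $(D\cdot N(\mathfrak f))^{k_1-1/2}\Gamma(k_1)^n$ up to an absolute constant, violates this integrality once $k_1$ is large relative to $\log(D\cdot N(\mathfrak f))$, bounding $k_1$. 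With $f$ fixed, comparing $c(\mathfrak p^2,g)=c(\mathfrak p,g)^2-\psi(\mathfrak p)N(\mathfrak p)^{k_1+k_2-1}$ from \eqref{eq2.2} against its expansion via the generalised Lemma~\ref{product} produces a term of size $N(\mathfrak p)^{k_2-1}$ facing a polynomial-in-$k_2$ bound supplied by Kim--Sarnak \eqref{eq2.3}, bounding $k_2$.

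The main obstacle is quantifying the trade-off between the weight, the discriminant, and the conductor, since the functional equation only sees the product $D\cdot N(\mathfrak f)$ and a priori a character of large conductor can absorb the growth in $k$ supplied by the $\Gamma$-factors, defeating any single-equation argument. The remark following Theorem~\ref{thm} in the paper suggests the remedy of using Fourier coefficients at a growing number of small primes to produce enough independent equations in the weight to eliminate the conductor; making this uniform in $F$ of degree $n$ requires combinatorial control on the totally positive decompositions appearing in the analogue of Lemma~\ref{product} (whose number grows with $n$ and with the discriminant of $F$), together with a lower bound on character differences of the form $\phi_1(\mathfrak p)\psi_1(\mathfrak p)-\phi_2(\mathfrak p)\psi_2(\mathfrak p)$ -- the analogue of Lemma~\ref{lem} -- that is polynomial in $D\cdot N(\mathfrak n)$ uniformly over narrow ray class characters. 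This last analytic input, which for $n=2$ and full level reduced to the trivial class number bound, appears to be the genuinely hard ingredient on which the full conjecture will stand or fall.
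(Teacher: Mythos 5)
The statement you are addressing is Conjecture~\ref{con:general}; the paper offers no proof of it, only the heuristic remark after Theorem~\ref{thm} that one should ``consider more Fourier coefficients and obtain more equations in the weights to get around of the conductors.'' Your proposal is a reasonable elaboration of that same remark, but it is a research plan rather than a proof, and you concede as much: the phrases ``should then bound $D$ as well'' and ``appears to be the genuinely hard ingredient'' mark exactly the places where the argument is not carried out. The central unresolved point is the one you name: in the functional equation the discriminant $D$ is replaced by $D\cdot N(\mathfrak f)$, so a single relation of the type \eqref{3} or \eqref{eq4.4} can no longer separate the weight from the conductor, and you give no mechanism (beyond the hope of ``several small primes'') that actually produces enough independent equations to eliminate $N(\mathfrak f)$ uniformly over all ray class characters of modulus $\mathfrak n$. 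Likewise the required analogue of Lemma~\ref{lem} --- a lower bound on $|\phi_1(\mathfrak p)\psi_1(\mathfrak p)-\phi_2(\mathfrak p)\psi_2(\mathfrak p)|$ polynomial in $D\cdot N(\mathfrak n)$, valid when the two characters are distinct at $\mathfrak p$ --- is asserted as a desideratum, not established; for the narrow class group at full level this reduced to the trivial bound $h^+\ll\sqrt D$, but for ray class groups of modulus $\mathfrak n$ the group order, hence the minimal gap between distinct character values, degrades with $N(\mathfrak n)$, and one must check this loss is beaten by the gain in the $L$-value asymptotics.

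Two further steps in your outline would fail as stated at general level. First, the Eisenstein--cuspidal argument hinges on $c(\mathcal O,g)=c_1(0,f)$, which uses that the cuspidal factor $h$ is a \emph{normalized} eigenform with $c(\mathcal O,h)=1$; at level $\mathfrak n$ there are oldforms and eigenforms (in the a.e.\ sense) with $c(\mathcal O,h)=0$, so the integrality argument for $c_1(0,f)^{-1}$ does not launch without first disposing of these cases --- this is precisely the complication that forced Ghate \cite{ghate2002products} and Emmons \cite{emmons2005products} to restrict to a.e.\ eigenforms and squarefree or prime level. Second, the classification of Eisenstein eigenforms with prescribed constant terms (Theorem~\ref{shimura} plus Corollary~\ref{wiles}) is used to identify $g$ with a specific $E_{k_1+k_2}(\phi,\psi)$; at general level the Eisenstein spectrum contains oldforms of several conductors with identical eigenvalues away from $\mathfrak n$, so the comparison of $\nu$-th constant terms that yields \eqref{eq1} and \eqref{eq4.2} no longer pins down a single pair $(\phi,\psi)$. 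Until these points and the conductor elimination are made precise, the conjecture remains open, and your proposal should be presented as a strategy rather than a proof.
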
	

It also natural to ask if the hypothesis on the degree of the totally real fields considered in conjecture~\ref{con:general} is necessary. In other words, perhaps the total number of such identities amongst all totally real fields is finite. But this may be too optimistic at this juncture.

\bibliographystyle{amsplain}
\bibliography{paper}

\end{document}